\documentclass[12pt]{amsart}

\pagestyle{plain}

\usepackage{amsfonts,amsmath,amsthm,amssymb,latexsym,amsthm,newlfont,enumerate,color}



%
%
%
%

%

\newif\ifslide

\theoremstyle{plain}

\ifdefined\spacer
\newtheorem{theorem}{Theorem}
 \else
\newtheorem{theorem}{Theorem}[section]
\fi

\newtheorem{corollary}[theorem]{Corollary}
\newtheorem{lemma}[theorem]{Lemma}

\newtheorem{proposition}[theorem]{Proposition}
\newtheorem{definition-lemma}[theorem]{Definition-Lemma}

\newtheorem{red-question}[theorem]{\textcolor{red}{Question}}

\theoremstyle{definition}
\newtheorem{definition}[theorem]{Definition}
\newtheorem{remark}[theorem]{Remark}

\newtheorem{example}[theorem]{Example}



\def\ideal#1.{I_{#1}}
\def\ring#1.{\mathcal {O}_{#1}}

\def\fring#1.{\hat{\mathcal {O}}_{#1}}
\def\proj#1.{\mathbb {P}(#1)}
\def\pr #1.{\mathbb {P}^{#1}}
\def\dpr #1.{\hat{\mathbb {P}}^{#1}}
\def\af #1.{\mathbb A^{#1}}
\def\Hz #1.{\mathbb F_{#1}}
\def\Hbz #1.{\overline{\mathbb F}_{#1}}
\def\fb#1.{\underset #1 {\times}}
\def\rest#1.{\underset {\ \ring #1.} \to \otimes}
\def\au#1.{\operatorname {Aut}\,(#1)}
\def\deg#1.{\operatorname {deg } (#1)}

\def\pic#1.{\operatorname {Pic}\,(#1)}
\def\pico#1.{\operatorname{Pic}^0(#1)}
\def\picg#1.{\operatorname {Pic}^G(#1)}
\def\ner#1.{NS (#1)}
\def\rdown#1.{\llcorner#1\lrcorner}
\def\rfdown#1.{\lfloor{#1}\rfloor}
\def\rup#1.{\ulcorner{#1}\urcorner}
\def\rcup#1.{\lceil{#1}\rceil}

\def\n1#1.{\operatorname {N_1}(#1)}  
\def\cn1#1.{\overline{\operatorname {N^1}(#1)}} 
\def\cone#1.{\operatorname {NE}(#1)}     
\def\ccone#1.{\overline{\operatorname {NE}}(#1)}
\def\none#1.{\operatorname {NF}(#1)}
\def\cnone#1.{\overline{\operatorname {NF}}(#1)}
\def\mone#1.{\operatorname {NM}(#1)} 
\def\cmone#1.{\overline{\operatorname {NM}}(#1)}

\def\coef#1.{\frac{(#1-1)}{#1}}
\def\vit#1.{D_{\langle #1 \rangle}}
\def\mm#1.{\overline {M}_{0,#1}}
\def\H1#1.{H^1(#1,{\ring #1.})}
\def\ac#1.{\overline {\mathbb F}_{#1}}

\def\adj#1.{\frac {#1-1}{#1}}
\def\spn#1.{\overline{#1}}
\def\pek#1.#2.{\Cal P^{#1}(#2)}
\def\plk#1.#2.{\Cal P^{\leq #1}(#2)}
\def\ev#1.{\operatorname{ev_{#1}}}
\def\ilist#1.{{#1}_1,{#1}_2,\dots}
\def\bminv#1.{(\nu_1,s_1;\nu_2,s_2;\dots ;\nu_{#1},s_{#1};\nu_{r+1})}
\def\zinv#1.{(\nu_1,s_1;\nu_2,s_2;\dots ;\nu_{#1},s_{#1};0)}
\def\iinv#1.{(\nu_1,s_1;\nu_2,s_2;\dots ;\nu_{#1},s_{#1};\infty)}

\def\scr #1.{\mathcal #1}


\def\llist#1.#2.{{#1}_1,{#1}_2,\dots,{#1}_{#2}}
\def\ulist#1.#2.{{#1}^1,{#1}^2,\dots,{#1}^{#2}}
\def\lomitlist#1.#2.{{#1}_1,{#1}_2,\dots,\hat {{#1}_i}, \dots, {#1}_{#2}}
\def\lomitlistz#1.#2.{{#1}_0,{#1}_1,\dots,\hat {{#1}_i}, \dots, {#1}_{#2}}
\def\loc#1.#2.{\Cal O_{#1,#2}}
\def\fderiv#1.#2.{\frac {\partial #1}{\partial #2}}
\def\deriv#1.#2.{\frac {d #1}{d #2}}

\def\map#1.#2.{#1 \longrightarrow #2}
\def\rmap#1.#2.{#1 \dasharrow #2}
\def\emb#1.#2.{#1 \hookrightarrow #2}
\def\non#1.#2.{\text {Spec }#1[\epsilon]/(\epsilon)^{#2}}
\def\Hi#1.#2.{\text {Hilb}^{#1}(#2)}
\def\sym#1.#2.{\operatorname {Sym}^{#1}(#2)}
\def\Hb#1.#2.{\text {Hilb}_{#1}(#2)}
\def\Hm#1.#2.{\Hom_{#1}(#2)}
\def\prd#1.#2.{{#1}_1\cdot {#1}_2\cdots {#1}_{#2}}
\def\Bl #1.#2.{\operatorname {Bl}_{#1}#2}
\def\pl #1.#2.{#1^{\otimes #2}}
\def\mgn#1.#2.{\overline {M}_{#1,#2}}
\def\ialist#1.#2.{{#1}_1 #2 {#1}_2, #2\dots}
\def\pair#1.#2.{\langle #1, #2\rangle}
\def\vandermonde#1.#2.{\left|
\begin{matrix}
1 & 1 & 1 & \dots & 1\\
{#1}_1 & {#1}_2 & {#1}_3 & \dots & {#1}_{#2}\\
{#1}_1^2 & {#1}_2^2 & {#1}_3^2 & \dots & {#1}_{#2}^2\\
\vdots & \vdots & \vdots & \ddots & \vdots\\
{#1}_1^{#2-1} & {#1}_2^{#2-1} & {#1}_2^{#2-1} & \dots & {#1}_{#2}^{#2-1}\\
\end{matrix}
\right|
}
\def\vandermondet#1.#2.{\left|
\begin{matrix}
1 & {#1}_1   & {#1}_1^2 & \dots & {#1}_1^{#2-1}\\
1 & {#1}_2   & {#1}_2^2 & \dots & {#1}_2^{#2-1}\\
1 & {#1}_3   & {#1}_3^2 & \dots & {#1}_3^{#2-1}\\
\vdots & \vdots & \vdots & \ddots & \vdots\\
1 & {#1}_{#2}& {#1}_{#2}^2 & \dots & {#1}_{#2}^{#2-1}\\
\end{matrix}
\right|
}
\def\gr#1.#2.{\mathbb{G}(#1,#2)}


\def\alist#1.#2.#3.{{#1}_1 #2 {#1}_2 #2\dots #2 {#1}_{#3}}
\def\zlist#1.#2.#3.{#1_0 #2 #1_1 #2\dots #2 #1_{#3}}
\def\lomitlist30#1.#2.#3.{{#1}_0,{#1}_1 #2 \dots #2\hat {{#1}_i} #2\dots #2 {#1}_{#3}}
\def\lmap#1.#2.#3.{#1 \overset{#2}{\longrightarrow} #3}
\def\mes#1.#2.#3.{#1 \longrightarrow #2 \longrightarrow #3}
\def\ses#1.#2.#3.{0\longrightarrow #1 \longrightarrow #2 \longrightarrow #3 \longrightarrow 0}
\def\les#1.#2.#3.{0\longrightarrow #1 \longrightarrow #2 \longrightarrow #3}
\def\res#1.#2.#3.{#1 \longrightarrow #2 \longrightarrow #3\longrightarrow 0}
\def\Hi#1.#2.#3.{\text {Hilb}^{#1}_{#2}(#3)}
\def\ten#1.#2.#3.{#1\underset {#2}{\otimes} #3}
\def\lomitlist30#1.#2.#3.{{#1}_0 #2 {#1}_1 #2 \dots #2 \hat {{#1}_i} #2 \dots #2 {#1}_{#3}}
\def\mderiv#1.#2.#3.{\frac {d^{#3} #1}{d #2^{#3}}}


\def\Hom{\operatorname{Hom}}

\def\dim{\operatorname{dim}}

\def\deg{\operatorname{deg}}

\def\dep{\operatorname{dep}}
\def\aw{\operatorname{aw}}

\def\GL{\operatorname{GL}}

\def\mult{\operatorname{mult}}

\def\fix{\operatorname{Fix}}
\def\bfix{\operatorname{\mathbf{Fix}}}

\def\rest{\operatorname{res}}


 


\def\e{\Cal E}

\def\e1{E_1}
\def\e2{E_2}



\def\mapdown#1{\big\downarrow\rlap{$\vcenter{\hbox{$\scriptstyle#1$}}$}}

\def\mapse#1{
{\vcenter{\hbox{$\mathop{\smash{\raise1pt\hbox{$\diagdown$}\!\lower7pt
\hbox{$\searrow$}}\vphantom{p}}\limits_{#1}\vphantom{\mapdown{}}$}}}}


\def\VR#1.{height#1pt&\omit&&\omit&&\omit&&\omit&&\omit&\cr}

\def\VRT#1.{height#1pt&\omit&&\omit&\cr}


\title{Effective finite  generation for adjoint rings}

\author{Paolo Cascini}
\address{Department of Mathematics\\
Imperial College London\\
180 Queen's Gate\\
London SW7 2AZ, UK}
\email{p.cascini@imperial.ac.uk}

\author{De-Qi Zhang}
\address{Department of Mathematics\\ National University of Singapore\\ 2 Science Drive 2 \\ Singapore 117543, Singapore}\email{matzdq@nus.edu.sg}
\thanks{The first author was partially supported by an EPSRC Grant and the second author was supported by an ARF of NUS.
Some of the work was completed while the  first author was visiting the Institute of Mathematics of the Romanian Academy. He would like to thank  F. Ambro for his generous hospitality. We would like to thank F. Ambro, A. Corti, A.S. Kaloghiros and V. Lazi\'c for several very  useful discussions.}

\begin{document}

\begin{abstract}
We describe a bound on the degree of the generators for some adjoint rings on surfaces and threefolds. 
\end{abstract}

\maketitle
\tableofcontents

\section{Introduction}

The aim of this paper is to provide a first step towards an effective version of the finite generation of adjoint rings.

The study of effective results in birational geometry has a long history. 
On the one hand, the boundedness   results on the pluricanonical maps for varieties of general type, due to Hacon, M\textsuperscript cKernan, Takayama and Tsuji \cite{HM06,Takayama06}, laid the foundation for a great deal of work towards an effective version of the (log)-Iitaka fibrations (e.g. \cite{VZ09,TX08}). 
On the other hand, Koll\'ar's   effective version  \cite{Kollar93b}   of the Kawamata-Shokurov's base point free theorem provides  an explicit  bound for a multiple of a nef adjoint divisor to be base point free. 
Finally, many recent results on the geography of projective threefolds of general type yield, in particular, a description of the singularities which may appear on varieties of general type (e.g. see \cite{ChenChen10a,ChenChen10}). 

The main goal of this paper is to combine together some of these results and study an effective version of the finite generation for adjoint rings. More specifically, given a Kawamata log terminal pair $(X,B)$, the main result of \cite{BCHM10} implies that the canonical ring $R(X,K_X+B)$ of $K_X+B$ is finitely generated (see also \cite{Siu08,CL10a}). Moreover, if $A$ is an ample $\mathbb Q$-divisor and $B_1,\dots,B_k$ are $\mathbb Q$-divisors such that $(X,B_i)$ is Kawamata log terminal for all $i=1,\dots,k$, then the associated adjoint ring $R(X;K_X+A+B_1,\dots,K_X+A+B_k)$ is also finitely generated  (cf. Definition \ref{d_adjointrings}).
On the other hand, the problem about the finite generation of $R(X;K_X+B_1,\dots,K_X+B_k)$, without the assumption of $B_i$ being big, is still open and it implies the abundance conjecture (e.g. see \cite{CL10}). Thus, it is reasonable to ask if there exists a bound  on the number of generators of the adjoint ring on a smooth projective variety which depends only on the numerical and topological invariants associated to the pairs $(X,B_i)$ for $i=1,\dots,k$. 

\medskip 

Inspired by these questions, our main result is the following:

\begin{theorem}\label{t_ld_threefold}
Let $(X,B)$ be a  Kawamata log terminal projective threefold such that $X$ is smooth. Assume that $B$ is nef and that  $B$ 
or $K_X+B$ is big. 
 Let $a$ be a positive integer such that $aB$ is Cartier. 

Then, there exists a positive integer $q$, depending only on  the Picard number $\rho(X)$ of $X$  such that  $R(X,qa(K_X+B))$ is generated in degree $5$ (cf. Definition \ref{d_adjointrings}).
\end{theorem}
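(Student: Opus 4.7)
The plan is to pass to a log minimal model and then combine effective base point freeness with a Castelnuovo--Mumford regularity argument.

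\emph{Reduction to a minimal model.} Since $B$ is nef and either $B$ or $K_X+B$ is big, after a small perturbation (to ensure bigness of $K_X+B$ in the case only $B$ is big) I would run the $(K_X+B)$-MMP, which terminates by \cite{BCHM10}. This produces a birational contraction $\phi:X\to Y$ onto a log minimal model on which $K_Y+B_Y$ is nef and big, $(Y,B_Y)$ is klt, $aB_Y$ is Cartier, and $\rho(Y)\leq\rho(X)$. Since the MMP preserves the section ring, $R(X,qa(K_X+B))\cong R(Y,qa(K_Y+B_Y))$, so it suffices to prove generation in degree $5$ on $Y$ for some $q=q(\rho(Y))$.

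\emph{Effective very-ampleness and regularity.} On $Y$ I would invoke Koll\'ar's effective base point free theorem \cite{Kollar93b}, together with its effective very-ampleness consequences, to produce an integer $q_0=q_0(\rho(Y))$ such that $M:=q_0a(K_Y+B_Y)$ is very ample and Cartier. For $q_0 a\geq 2$, the divisor $M-K_Y=(q_0 a-1)(K_Y+B_Y)+B_Y$ is nef and big, so Kawamata--Viehweg vanishing on the klt pair $(Y,B_Y)$ yields $H^i(Y,kM)=0$ for all $i>0$ and $k\geq 1$. Consequently $\mathcal{O}_Y$ is $r$-regular with respect to $M$ for every $r\geq 4$, and Mumford's regularity theorem produces surjectivity of the multiplication $H^0(Y,M)\otimes H^0(Y,kM)\to H^0(Y,(k+1)M)$ for all $k\geq 4$. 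Iterating, every section in degree at least $5$ is a product of sections of lower degree, so $R(Y,M)$ is generated in degree at most $5$.

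\emph{Main obstacle.} The principal difficulty is in the second step: producing an effective very-ampleness bound that depends only on $\rho(X)$, rather than on additional invariants such as $\vol(K_Y+B_Y)$ or the Cartier indices of the singularities of $Y$. Koll\'ar's bounds a priori involve singularity data, so one must augment them with boundedness of klt threefold minimal models in a fixed Picard rank, and exploit the hypothesis that $aB$ is Cartier to bound denominators uniformly along the MMP. The regularity step is then routine in comparison, once the very-ampleness bound is in place.
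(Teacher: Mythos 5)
There is a genuine gap, and it sits exactly where you flag your ``main obstacle'': you never bound the Cartier index of $K_Y+B_Y$ on the minimal model in terms of $\rho(X)$, and without that bound Koll\'ar's theorem cannot produce a constant $q$ depending only on $\rho(X)$. Your suggestion to fix this via ``boundedness of klt threefold minimal models in a fixed Picard rank'' is not an available result, and your interim claim that $aB_Y$ is Cartier on $Y$ is false in general: the MMP creates singularities, and the pushforward of a Cartier divisor need not remain Cartier. Koll\'ar's effective base point free theorem gives a bound depending only on the dimension \emph{once} one knows some fixed multiple $ra(K_Y+B_Y)$ is Cartier; producing such an $r=r(\rho(X))$ is the entire content of the theorem, not a routine augmentation.

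The paper resolves this in two steps that your proposal is missing. First, instead of running the $(K_X+B)$-MMP, it runs the $K_X$-MMP and uses the hypothesis that $B$ is nef (Lemma \ref{l_nef}) to show that a suitable sequence of $K_X$-negative steps still lands on a log terminal model of $(X,B)$; the point is that the output $Y$ then has \emph{terminal} (not merely klt) singularities, since $X$ is smooth. Second, for terminal threefolds one can control the singularities combinatorially: using Hayakawa's $w$-resolutions and the Chen--Hacon depth function, the paper proves $\Xi(Y)\le 2\dep(Y)\le 2\rho(X)$ (Lemma \ref{l_xidep} and Proposition \ref{p_2b2}), where $\Xi(Y)$ is the sum of the indices in the baskets of $Y$; this bounds the Cartier index of $Y$ by a function of $\rho(X)$ alone. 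Only then does the argument proceed as you describe, with effective base point freeness and the Mumford-regularity-type surjectivity (Proposition \ref{p_mumford}) giving generation in degree $5$. Your final regularity step is essentially the paper's, and a small further remark: base point freeness of $|qa(K_Y+B_Y)|$ suffices there, so you do not need effective very ampleness.
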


As a direct consequence, we obtain: 

\begin{corollary}\label{c_main}
Let $X$ be a smooth projective threefold of general type.

 Then there exists a constant $m$ which depends only on the Picard number $\rho(X)$ such that the stable base locus of $K_X$ coincides with the base locus of the linear system 
 $|mK_X|$. 
\end{corollary}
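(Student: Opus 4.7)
The plan is to specialize Theorem~\ref{t_ld_threefold} to the boundary $B=0$, and then translate the resulting finite generation statement for the canonical ring of $X$ into a statement about the stable base locus. Since $X$ is a smooth projective threefold of general type, the pair $(X,0)$ is Kawamata log terminal, the divisor $B=0$ is trivially nef and Cartier, and $K_X+B=K_X$ is big. Taking $a=1$, Theorem~\ref{t_ld_threefold} produces a positive integer $q$, depending only on $\rho(X)$, such that $R(X,qK_X)$ is generated in degree~$5$, that is, as a $\mathbb{C}$-algebra it is generated by homogeneous elements sitting in graded pieces of degree at most~$5$.

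I then pass to a sufficiently divisible Veronese to upgrade this to generation in degree one. Setting $N:=\mathrm{lcm}(1,2,3,4,5)=60$ and $m:=Nq$, any generator $s\in H^0(X,iqK_X)$ of degree $i\le 5$ can be raised to the $(N/i)$-th power to produce an element of $H^0(X,mK_X)$. Regrouping factors in monomials expressing an element of $H^0(X,\ell m K_X)$, one checks that the multiplication map
$$\operatorname{Sym}^{\ell} H^0(X,mK_X)\longrightarrow H^0(X,\ell m K_X)$$
is surjective for every $\ell\ge 1$; equivalently, the Veronese ring $R(X,mK_X)$ is generated in degree one.

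The base-locus conclusion is then formal. Generation in degree one forces $\bs(|\ell m K_X|)=\bs(|mK_X|)$ for every $\ell\ge 1$: the inclusion ``$\supseteq$'' holds because every section of $\ell m K_X$ is a polynomial in sections of $mK_X$ and therefore vanishes on $\bs(|mK_X|)$, and ``$\subseteq$'' holds because any section of $mK_X$ non-vanishing at a point $x$ yields, by taking an $\ell$-th power, a section of $\ell m K_X$ non-vanishing at $x$. Combining this with the standard equality $\Bs(K_X)=\bigcap_{\ell\ge 1}\bs(|\ell m K_X|)$ gives $\Bs(K_X)=\bs(|mK_X|)$, and since $m=60q$ depends only on $q$, and hence only on $\rho(X)$, the corollary follows with this $m$.

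The only substantive point is the passage from ``generated in degree~$5$'' in the sense of Definition~\ref{d_adjointrings} to generation of the Veronese $R(X,mK_X)$ in degree one, and I expect this to be a short, routine check once that definition is unpacked; if the convention is that ``generated in degree~$5$'' already means generation by $H^0(X,5qK_X)$ alone, then $m=5q$ suffices directly, with no $\mathrm{lcm}$-trick needed. Either way the remaining steps are definitional.
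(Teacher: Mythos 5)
Your argument is correct and is essentially the paper's: the paper deduces the corollary from Theorem~\ref{t_ld_threefold} (applied with $B=0$, $a=1$) together with Lemma~\ref{l_b}, which is precisely the translation step you carry out by hand, except with $q=5!=120$ in place of your $\mathrm{lcm}(1,\dots,5)=60$. The one difference worth noting is that Lemma~\ref{l_b} argues pointwise (a section of $H^0(X,iqK_X)$ not vanishing at $x$ has a suitable power in $H^0(X,120qK_X)$ not vanishing at $x$), which sidesteps the exact-regrouping combinatorics behind your stronger assertion that the Veronese ring $R(X,60qK_X)$ is generated in degree one --- that assertion is true but is the one step you should not dismiss as ``one checks.''
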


In addition, we obtain a stronger version of the theorem above in the case of surfaces. It is worth to mention that 
the proofs of these results rely in a crucial way on the classification of Kawamata log terminal surface singularities and terminal threefold singularities. 
\medskip

The paper is organized as follows. In section \ref{s_pr}, we describe the main tools used in the paper, which are mainly based on Koll\'ar's effective base point free theorem, Mumford's regularity theorem and the classification of surface and threefold singularities. In section \ref{s_bound}, we describe a bound on the index of the singularities of the minimal model of a smooth projective threefold $X$, which depends on the Picard number $\rho(X)$ of $X$.   The bound is obtained as a consequence of a recent result by Chen and Hacon \cite{ChenHacon11}. 
Finally, in section \ref{s_fg}, we prove the main results of the paper and we show, in many examples, that some of these results are optimal. In particular, the bound on the degree of the generators of a Kawamata log terminal pair $(X,B)$  depends on the Picard number of the projective variety $X$. 

Note that all the bounds obtained in the paper are easily computable, but they are far away from being sharp. For this reason, we  often omit an explicit description of these bounds.  

\section{Preliminary results}\label{s_pr}

\subsection{Notation}
We work over the field of complex numbers $\mathbb C$. 
We refer to \cite{KM98} for the classical  definitions of singularities in the Minimal Model Program.  In particular, given a log pair $(X,B)$, we denote by $a(\nu,X,B)$ the {\em discrepancy} of $(X,B)$ with respect to a valuation $\nu$.  
A rational map $f\colon \rmap X.Y.$ between normal projective varieties $X$ and $Y$ is a \emph{contraction} if the inverse map $f^{-1}$ does not contract any divisors.
The {\em exceptional locus} of $f$ is the subset of $X$ on which $f$ is not an isomorphism.  

Let $f\colon \rmap X.Y.$ be a proper birational contraction of normal projective varieties and let $D$ be an $\mathbb R$-Cartier divisor on $X$ such that $D_Y=f_*D$ is also $\mathbb R$-Cartier. Then $f$ is 
$D$-\emph{non-positive} (respectively $D$-\emph{negative}) if for some common resolution $p\colon \map W.X.$ and $q\colon \map W.Y.$ which resolves the indeterminacy locus of $f$, we may write
$$p^*D=q^*D_Y+E$$
where $E\ge 0$ is $q$-exceptional (respectively $E\ge 0$ is $q$-exceptional and the support of $E$ contains the strict transform of the exceptional divisor of $f$).     
In particular, if $(X,B)$ is a Kawamata log terminal pair, $D=K_X+B$ and $D_Y=K_Y+B_Y$, then $f$  is $(K_X+B)$-non-positive (respectively $(K_X+B)$-negative) if and only if  
$$a(F,X,B)\le a(F,Y,f_*B) \qquad (\text{ respectively }a(F,X,B)<a(F,Y,f_*B) ~)$$ 
for all the prime divisors $F$ which are exceptional over $Y$.

 Let $(X,B)$ be a Kawamata log terminal pair. 
 A proper birational contraction  $f\colon \rmap X.Y.$ of normal projective varieties is a \emph{log terminal model} 
for  $(X,B)$ if $f$ is $(K_X+B)$-negative,
$Y$ is $\mathbb Q$-factorial and $K_Y+f_*B$ is nef. 
If $B=0$ then a log terminal model of $(X,B)$ is called a \emph{minimal model} of $X$.
We denote by LMMP$_n$ the classical conjectures in the Log Minimal Model program in dimension $n$. 
In particular, LMMP$_n$ implies that each Kawamata log terminal pair $(X,B)$ such that $K_X+B$ is pseudo-effective admits a log terminal model.

\begin{definition}\label{d_z}
Let $X$ be a smooth projective variety and let $D$ be a $\mathbb Q$-divisor on $X$. We denote by $\kappa(X,D)$ the Kodaira dimension of $D$. For any positive integer $q$ such that $qD$ is Cartier and the linear system $|qD|$ is not empty, we denote by $\fix|qD|$, the fixed part of the linear system $|qD|$.  Thus, if $\kappa(X,D)\ge 0$, we may define
$$\bfix(D)=\liminf_{q\to \infty} \frac 1 q \fix|qD|,$$
where the limit is taken over all  sufficiently divisible positive integers.
\end{definition}

\begin{remark}\label{r_z}
Let $(X,B)$ be a log smooth projective pair of dimension $n$ such that $\rfdown B.=0$ and let $f\colon \rmap X.Y.$ be a log terminal model of $K_X+B$. Then, if $B_Y=f_*B$, we may write 
$$K_X+B=f^*(K_Y+B_Y)+E$$ 
for some $f$-exceptional $\mathbb Q$-divisor $E\ge 0$. 
The negativity lemma (e.g. \cite[Lemma 3.6.2]{BCHM10}) implies that $E$ does not depend on the log terminal model $f$. In addition, assuming LMMP$_n$, it is easy to check that 
$E=\bfix(K_X+B)$.
\end{remark}

\begin{lemma}\label{l_nef}
Let $(X,B)$ be a  $\mathbb Q$-factorial projective Kawamata log terminal pair. Assume that $B$ is nef, $K_X+B$ is pseudo-effective and that $B$ or $K_X+B$ is big. 

Then there exists a sequence of steps 
$$X=\rmap X_0.\rmap .\dots. .X_k=Y.$$
of the $K_X$-minimal model program such that the induced birational map $f\colon \rmap X.Y.$ is a 
log terminal model of $(X,B)$. 
\end{lemma}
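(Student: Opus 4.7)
The plan is to realize a log terminal model of $(X,B)$ as the outcome of a $(K_X+B)$-MMP and then verify that every step of that MMP is simultaneously a step of the $K_X$-MMP.

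First, I would use BCHM to produce a log terminal model. If $K_X+B$ is big, BCHM applies directly to the klt pair $(X,B)$. If instead $B$ is big, I would replace $B$ by a $\mathbb Q$-linearly equivalent divisor $\tilde B$ with an ample summand: by Kodaira's lemma write $B\sim_{\mathbb Q}\epsilon A+F$ with $A$ ample and $F\geq 0$, set $\tilde B=(1-\delta)B+\delta(\epsilon A+F)$, and observe that $(X,\tilde B)$ is klt for $\delta>0$ small, so BCHM applies. In either case the log terminal model $Y$ is obtained as a finite sequence $X=X_0\dashrightarrow X_1\dashrightarrow\cdots\dashrightarrow X_k=Y$ of $(K_X+B)$-MMP steps with scaling of an ample divisor, and termination follows from BCHM.

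Second, I would show every step is $K_X$-negative. At each step the contracted ray $R_i$ satisfies $(K_{X_i}+B_i)\cdot R_i<0$, where $B_i$ denotes the strict transform of $B$. Since $B$ is nef on $X=X_0$, the first step is immediate: $B\cdot R_0\geq 0$ gives $K_X\cdot R_0\leq(K_X+B)\cdot R_0<0$. The key inductive observation is that the only rays on which $B_i$ fails to be non-negative are the flipped rays $R_j^+$ arising from earlier flips of rays with $B\cdot R>0$; by the characterizing property of flips these ``bad rays'' satisfy $(K_{X_{j+1}}+B_{j+1})\cdot R_j^+>0$, so they are not candidates for subsequent $(K_X+B)$-MMP steps. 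Consequently, the contracted ray $R_i$ at each step always satisfies $B_i\cdot R_i\geq 0$, giving $K_{X_i}\cdot R_i<0$.

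The hard part will be tracking this property through later steps: a subsequent divisorial contraction $\pi\colon X_i\to X_{i+1}$ changes intersection numbers via the relation $(K_{X_i}+B_i)=\pi^*(K_{X_{i+1}}+B_{i+1})+aE$ with $a>0$, and one must verify that $(K+B)$-positivity of previously-flipped rays is preserved so that they remain excluded from future contractions; this requires a careful use of the negativity lemma. As a safer alternative, one may perturb $B$ to $B+\eta H$ for $H$ ample and small $\eta>0$, run the $K_X$-MMP with scaling of $B+\eta H$---which at each step produces a $K_X$-negative ray directly from the extremality condition $(B+\eta H)\cdot R_i>0$---and then take $\eta\to 0$, using that for $\eta$ sufficiently small the resulting MMP is independent of $\eta$ and yields a log terminal model of $(X,B)$.
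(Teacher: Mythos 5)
Your ``safer alternative'' at the end is, in essence, the paper's actual proof: one runs the $K_X$-MMP with scaling of an effective divisor $H_\varepsilon\sim_{\mathbb Q}B+\varepsilon A$ with $A$ ample, so that every extracted ray is $K_X$-negative by construction, terminates with $K+f_{\varepsilon*}H_\varepsilon$ nef, and then invokes finiteness of models (BCHM Theorem E) to find $\varepsilon_i\to 0$ with constant output $Y$, whence $K_Y+B_Y$ is nef as a limit of nef classes; $(K_X+B)$-negativity of the contracted rays follows because each ray satisfies $K\cdot R<0$ and $H_\varepsilon\cdot R>0$, and $A\cdot R>0$ by ampleness. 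One point you gloss over but which the paper must address is termination down to scaling parameter $1$: BCHM guarantees this only when the relevant boundary is big, which is why the paper, in the case where $B$ is not big, uses bigness of $K_X+B$ to rewrite the scaled MMP as a $(K_X+C)$-MMP with $C$ big. In your perturbed version this is automatic since $B+\eta H$ is ample, so your setup actually simplifies that step; but you should say it.

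Your primary plan, by contrast, has a genuine gap that the fallback does not repair so much as replace. The ``key inductive observation'' --- that the only curves on which $B_i$ can be negative are the flipped curves of earlier steps, and that these are excluded from later contractions by $(K+B)$-positivity --- is not justified and is not true in the generality you need. Nefness of the strict transform of $B$ is destroyed by the first flip whose flipping curves meet $B$ positively, and from that point on there is no control: subsequent divisorial contractions push forward a non-nef divisor, new extremal rays appear whose supporting curves pass through the flipped loci without being the flipped curves themselves, and $(K+B)$-positivity of a curve class on $X_{j+1}$ says nothing about the sign of $B_i$ on extremal rays of $X_i$ for $i>j+1$. Your proposed repair via the negativity lemma addresses discrepancies of divisors, not intersection numbers with curves, so it does not obviously close this. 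The correct move is exactly the one you make at the end: abandon the attempt to certify $K_X$-negativity a posteriori, and instead build it in from the start by scaling with an ample perturbation of $B$.
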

\begin{proof}
Let $A\ge 0$ be an ample $\mathbb Q$-divisor.
For any rational number $\varepsilon>0$, since $B$ is nef, we have that  $B+\varepsilon A$ is ample. Thus, there exist a $\mathbb Q$-divisor $H_\varepsilon\sim_{\mathbb Q}B+\varepsilon A$ and $\lambda_\varepsilon>0$ such that, for any sufficiently small $\varepsilon$, the pair $(X,\lambda_\varepsilon H_\varepsilon)$ is Kawamata log terminal and $K_X+\lambda_\varepsilon H_\varepsilon$ is nef.

 Let us consider the $K_X$-minimal model program of $X$ with scaling of $H_\varepsilon$ \cite[Remark 3.10.10]{BCHM10}. 
 
Note that if $B$ is not big, then by assumption, $K_X+B$ is big and therefore there 
exist  $\delta,\eta>0$ such that  $\eta(K_X+B)\sim_{\mathbb Q}\delta A+B'$ for some  $\mathbb Q$-divisor $B'\ge 0$ such that $(X,B+B')$ is Kawamata log terminal. Let $C=B+B'+(\delta +\varepsilon(1+\eta))A$. We may assume that $(X,C)$ is Kawamata log terminal. If $1\le t\le \lambda_\varepsilon$, we have
$$\begin{aligned}
(1+\eta)&(K_X+tH_\varepsilon)\sim_{\mathbb Q}  (1+\eta)(K_X+B+\varepsilon A+(t-1)H_\varepsilon)\\
&\sim_{\mathbb Q} K_X+B+B'+(\delta +\varepsilon(1+\eta))A+(t-1)(1+\eta)H_\varepsilon\\
&\sim_{\mathbb Q}K_X+C+(t-1)(1+\eta)H_\varepsilon.
\end{aligned}
$$
Thus, if $1\le t\le \lambda_\varepsilon$, a log terminal model of $(X,tH_\varepsilon)$ is also a log terminal of $(X,C+(t-1)(1+\eta)H_\varepsilon)$ and the $K_X-$minimal model with scaling of $H_\varepsilon$ coincides with the $(K_X+C)$-minimal model with scaling of $(1+\eta)H_\varepsilon$. 
 
 Therefore, after a finite number  steps of the $K_X$-minimal model program, we obtain a $K_X$-negative map $f_\varepsilon\colon \rmap X. X_\varepsilon.$ such that $X_\varepsilon$ is $\mathbb Q$-factorial and  $K_{X_\varepsilon}+f_{\varepsilon *} H_\varepsilon$ is nef. By finiteness of models \cite[Theorem E]{BCHM10}, there exists a sequence $\varepsilon_i$ such that $\lim \varepsilon_i=0$ and $X_{\varepsilon_i}$ is constant. In particular $K_{X_{\varepsilon_i}}+f_{{\varepsilon}_i *}B$ is nef.  Note that since  $f_{\varepsilon_i}\colon \rmap X.X_{\varepsilon_i}.$ is $(K_X+H_{\varepsilon_i})$-negative and $A$ is ample, it is also $(K_X+B)$-negative. Thus $f_{\varepsilon_i}$ is  a log terminal model of $(X,B)$.  
\end{proof}

\subsection{Koll\'ar's effective base point freeness}
In this section we  describe some easy generalisations of Koll\'ar's base point freeness theorem and Mumford's regularity theorem.

\begin{theorem}\label{t_ebpf}
Let $(X,B)$ be a projective Kawamata log terminal pair of dimension $n$ such that $K_X+B$ is nef and  $B$ or $K_X+B$ is big. Let $a$ be a positive integer such that $a(K_X+B)$ is Cartier. 

Then there exists a positive integer $q$ depending only on $n$ such that 
the linear system $|qa(K_X+B)|$ is base point free. 
\end{theorem}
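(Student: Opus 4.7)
The plan is to reduce Theorem \ref{t_ebpf} to (a Kawamata log terminal version of) Koll\'ar's effective base point freeness theorem \cite{Kollar93b}, applied to the nef Cartier divisor $D:=a(K_X+B)$. What one needs in order to invoke Koll\'ar's theorem with a bound depending only on $n$ is a boundary $\Delta$ on $X$ with $(X,\Delta)$ Kawamata log terminal such that $2D-(K_X+\Delta)$ is nef and big; the theorem then provides an integer $q_0=q_0(n)$ for which $|mD|$ is base point free whenever $m\geq q_0$, and we take $q:=q_0$.

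First I would dispose of the easier case where $K_X+B$ is big. Taking $\Delta:=B$, one has
\[
2D - (K_X+B) \;=\; (2a-1)(K_X+B),
\]
a positive multiple of a nef and big divisor, hence nef and big. Koll\'ar's theorem applied to $(X,B)$ and $D$ then yields the desired base point freeness, with the constant depending only on $n$ since the coefficient in front of $D$ is $2$.

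In the remaining case, where $B$ is big but $K_X+B$ may only be nef, I would carry out a small perturbation. By Kodaira's lemma $B\sim_{\mathbb Q} A+E$ with $A$ ample and $E\geq 0$ effective, and for small $\epsilon>0$ one sets $\Delta_\epsilon:=(1-\epsilon)B+\epsilon E$. Since $\Delta_\epsilon\to B$ as $\epsilon\to 0$, openness of the Kawamata log terminal condition ensures that $(X,\Delta_\epsilon)$ is Kawamata log terminal for $\epsilon$ sufficiently small. A direct $\mathbb Q$-linear calculation gives
\[
2D-(K_X+\Delta_\epsilon) \;\sim_{\mathbb Q}\; (2a-1)(K_X+B)+\epsilon A,
\]
which is nef, as a sum of a nef and an ample divisor, and big, with bigness guaranteed by the ample summand $\epsilon A$ even when $K_X+B$ is only nef. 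Koll\'ar's theorem applied to $(X,\Delta_\epsilon)$ and $D$ then supplies the same bound $q_0(n)$.

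The main obstacle I anticipate is extracting from \cite{Kollar93b} a form of effective base point freeness in the Kawamata log terminal setting in which the constant depends only on the dimension $n$ and on the coefficient $2$ appearing in $2D-(K_X+\Delta)$, and not on the particular boundary $\Delta$ or on the integer $a$. This requires tracking the induction on dimension in Koll\'ar's argument and verifying that the constants arising from Nadel vanishing and from the construction of auxiliary sections remain uniform across the family of pairs $(X,\Delta_\epsilon)$ above. Once this uniform input is in hand, the two cases yield Theorem \ref{t_ebpf} with $q=q_0(n)$.
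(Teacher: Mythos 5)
Your proof is correct and follows essentially the same route as the paper: when $K_X+B$ is big one applies Koll\'ar's effective base point freeness to $2a(K_X+B)$ directly, and when $B$ is big one perturbs the boundary via $B\sim_{\mathbb Q}A+E$ and $\Delta_\epsilon=(1-\epsilon)B+\epsilon E$ to make the relevant difference ample. The uniformity you worry about in the last paragraph is already built into the statement of \cite[Theorem 1.1]{Kollar93b}, whose bound depends only on $n$ and the coefficient in front of the nef divisor, so no re-examination of Koll\'ar's induction is needed.
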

\begin{proof}
If $B$ is big, then there exists an ample $\mathbb Q$-divisor $A$ and an effective $\mathbb Q$-divisor $D$ such that $B\sim_{\mathbb Q}A+D$. Then if $\varepsilon>0$ is a sufficiently small rational number and 
$\Delta=(1-\varepsilon)B+\varepsilon D$, then the pair $(X,\Delta)$ is Kawamata log terminal and 
$B\sim_{\mathbb Q}\Delta+\varepsilon A.$
Thus, if $M=a(K_X+B)$, then
$$M-(K_X+\Delta)\sim_{\mathbb Q} \varepsilon A + (a-1)(K_X+B)$$ 
is ample and the result follows by Koll\'ar's effective base point freeness theorem \cite{Kollar93b}.

Thus, we may assume that $K_X+B$ is big and nef. 
Let $L=2a(K_X+B)$. Then $L$ is Cartier and $L-(K_X+B)$ is big and nef. 
The result follows again from \cite{Kollar93b}. 
\end{proof}

\begin{remark}
Using the notation of Theorem \ref{t_ebpf}, by \cite[Theorem 1.1]{Kollar93b} 
we can take $q=4(n+2)!(n+1)$.
\end{remark}

\begin{lemma}\label{l_ebpf}
Let $(X,B)$ be a projective Kawamata log terminal surface  such that $K_X+B$ is nef. Let $a$ be a positive integer such that $a(K_X+B)$ is Cartier. 

Then there exists a positive integer  $m$ depending only on $a$ such that 
$|m(K_X+B)|$ is base point free.  
\end{lemma}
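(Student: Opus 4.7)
The argument splits into cases depending on whether $K_X+B$ is big. If $K_X+B$ is big, Theorem~\ref{t_ebpf} applied in dimension $n=2$ produces a positive integer $q$ depending only on $n$ such that $|qa(K_X+B)|$ is base point free, so taking $m=qa$ closes this case.

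If $K_X+B$ is not big, the nefness of $K_X+B$ forces $(K_X+B)^2=0$. The classical log abundance theorem for klt surface pairs ensures that $K_X+B$ is semi-ample, so a sufficiently divisible multiple defines a morphism $f\colon X\to Y$ with connected fibers, $\dim Y\le 1$, and $K_X+B\sim_{\mathbb Q} f^\ast H$ with $H$ ample $\mathbb Q$-Cartier on $Y$ (or $Y$ a point); the Cartier index of $H$ divides $a$.

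When $\dim Y=1$, so $Y$ is a smooth curve, we use the canonical bundle formula $K_X+B=f^\ast(K_Y+B_Y+M_Y)$ together with the fact that the general fibre of $f$ is a klt Calabi--Yau curve (hence $\mathbb P^1$ with boundary or an elliptic curve) to bound the genus of $Y$ in terms of $a$ and to lower-bound $\deg(aH)$; effective base point freeness on curves then gives $m=m(a)$ with $|m\cdot aH|$ base point free, and pullback by $f$ finishes this sub-case. When $\dim Y=0$, we have $K_X+B\equiv 0$ and must exhibit $m$ with $ma(K_X+B)\sim 0$; here we appeal to the classification of klt surface singularities together with the structure of klt Calabi--Yau surfaces (abelian, K3, Enriques, and bi-elliptic models in the smooth case, plus the controlled list of singular models in general) to bound the torsion order of the numerically trivial Cartier divisor $a(K_X+B)$ in $\mathrm{Pic}^\tau(X)$ by a function of $a$ alone.

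The main obstacle is the last sub-case: producing a uniform torsion bound for $a(K_X+B)$ purely in terms of $a$, with no further assumption on $X$, is not formal and requires the full classification of klt surface singularities --- precisely the tool highlighted in the introduction. By contrast, the big case and the $\dim Y=1$ sub-case are essentially applications of Koll\'ar's effective base point freeness and its curve analog.
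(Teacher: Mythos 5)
Your proposal follows essentially the same route as the paper: split into the big case (handled by Theorem \ref{t_ebpf}), the fibration-over-a-curve case (canonical bundle formula plus effective control of the discriminant and moduli parts, which the paper gets from \cite{Kawamata98} and \cite[Theorem 8.1]{PS09}), and the numerically trivial case (an effective torsion bound, which the paper simply quotes as \cite[Theorem 3.1]{TX08} rather than rederiving from the classification of log Calabi--Yau surfaces). One small correction to the curve case: the genus of $Y$ is \emph{not} bounded in terms of $a$ (consider a product of a high-genus curve with an elliptic curve), but this is not needed --- since $aH\sim_{\mathbb Q}a(K_Y+B_Y+M_Y)$ with $a(B_Y+M_Y)$ of denominator bounded in terms of $a$, one gets $\deg(maH)\ge 2g(Y)$ for some $m=m(a)$, which is all that effective base point freeness on curves requires.
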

\begin{proof}
By Theorem \ref{t_ebpf}, we may assume that $K_X+B$ is not big. 
If $K_X+B\sim_{\mathbb Q} 0$, then the results follows from \cite[Theorem 3.1]{TX08}.

Thus, we may assume that there exists a map $f\colon \map X.C.$ onto a smooth curve $C$ such that $K_X+B=f^*D$ for some ample $\mathbb Q$-divisor $D$ on $C$.  We may assume that $D=K_C+B_C$ for some effective $\mathbb Q$-divisor $B_C$ on $C$ such that $\rfdown B_C.=0$ (e.g. see \cite{Kawamata98}). By  \cite[Theorem 8.1]{PS09},  there exists a constant $b$, depending only on $a$ such that 
$bB_C$ is Cartier. Thus, the results follows.  
\end{proof}

The next result follows closely the proof of Mumford's regularity theorem (e.g. see \cite[Theorem 1.8.3]{Lazarsfeld04a}).

\begin{proposition}\label{p_mumford}
Let $X$ be a normal projective variety of dimension $n$. Let $B_1,\dots,B_k$ be $\mathbb Q$-divisors on $X$ and let $a_1,\dots,a_k$ be positive integers such that $(X,B_i)$ is a Kawamata log terminal pair and there exist Cartier divisors $L_1,\dots,L_k$ such that $L_i\sim_{\mathbb Q}a_i(K_X+B_i)$ 
and the linear system $|L_i|$ is base point free, 
for $i=1,\dots,k$.   
Let $G=\sum_{i=1}^k b_i L_i$  for some positive integers $b_1,\dots,b_k$ and assume     that  $b_\ell > n+1$. 

Then,  the natural map 
$$H^0(X,\ring X.(G))\otimes H^0(X,\ring X.( L_\ell ))
\to H^0(X,\ring X.(G+ L_\ell ))$$
is surjective.
\end{proposition}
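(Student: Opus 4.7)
The approach is to follow closely the proof of Lazarsfeld's version of Mumford's regularity theorem (\cite[Theorem 1.8.3]{Lazarsfeld04a}), with $L_\ell$ playing the role of the globally generated line bundle and $\mathcal{O}_X(G)$ the role of the coherent sheaf. The strategy is to reduce the surjectivity to the Castelnuovo--Mumford $0$-regularity hypothesis for $\mathcal{O}_X(G)$ with respect to $L_\ell$, namely
$$H^i\bigl(X, \mathcal{O}_X(G - i L_\ell)\bigr) = 0 \qquad \text{for every } i = 1, \ldots, n,$$
and then to verify this vanishing using the klt hypothesis on $(X, B_\ell)$ together with Kawamata--Viehweg vanishing.

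For the vanishing, one rewrites
$$G - i L_\ell - (K_X + B_\ell) \sim_{\mathbb Q} \sum_{j \neq \ell} b_j L_j + \bigl(b_\ell - i - \tfrac{1}{a_\ell}\bigr) L_\ell.$$
The hypothesis $b_\ell > n+1$ together with $i \leq n$ forces the coefficient $b_\ell - i - 1/a_\ell$ to be strictly positive (in fact at least $1 - 1/a_\ell \geq 0$, and in practice $\geq 1$ since $b_\ell \geq n+2$), so the right-hand side is a positive $\mathbb Q$-combination of the base-point-free divisors $L_j$, and in particular nef. Applying Kawamata--Viehweg vanishing to the klt pair $(X, B_\ell)$ then produces the desired vanishings.

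Given the vanishings, the Mumford-style proof of surjectivity proceeds as follows. Since $|L_\ell|$ is base point free, after passing (if necessary) through the morphism defined by $|L_\ell|$, we can select sections $s_0, \ldots, s_n \in H^0(X, L_\ell)$ with empty common zero locus on $X$. The associated Koszul complex is then exact; tensoring by $\mathcal{O}_X(G + L_\ell)$ produces an exact complex
$$0 \to \mathcal{O}_X(G - n L_\ell) \to \mathcal{O}_X(G - (n-1) L_\ell)^{\oplus \binom{n+1}{n}} \to \cdots \to \mathcal{O}_X(G)^{\oplus(n+1)} \to \mathcal{O}_X(G + L_\ell) \to 0.$$
Splitting this into short exact sequences and chasing the long exact sequences in cohomology, the vanishings established above imply that the final arrow is surjective on global sections. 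Since it factors through $\mathrm{span}(s_0, \ldots, s_n) \otimes H^0(X, \mathcal{O}_X(G)) \subseteq H^0(X, L_\ell) \otimes H^0(X, \mathcal{O}_X(G))$, the natural multiplication map is also surjective.

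The main obstacle is the verification of the cohomological vanishing: Kawamata--Viehweg in its standard form requires that the nef divisor $G - i L_\ell - (K_X + B_\ell)$ also be big, which is not automatic from the hypothesis that the $L_j$ are merely base point free. In practice, one argues bigness by exploiting either a bigness assumption on some $L_j$ inherited from the ambient application (for example bigness of $K_X + B$, as in the setting of Theorem~\ref{t_ld_threefold}), or by invoking a Koll\'ar-type vanishing for semi-ample divisors. Once the vanishing is in hand, the Koszul/Mumford machinery runs routinely and yields the required surjectivity.
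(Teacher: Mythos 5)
You have correctly reproduced the Koszul/Mumford machinery and the key numerical computation, but your proof stops exactly where the real work of the proposition begins: the bigness needed for Kawamata--Viehweg. The statement does not assume that any $L_j$ (or any combination of them) is big, and your closing paragraph concedes that you do not know how to establish the vanishing $H^i(X,\ring X.(G-iL_\ell))=0$ without it. Neither of your two suggested escapes closes this gap as written: importing a bigness hypothesis ``from the ambient application'' proves a weaker proposition than the one stated (and the paper does apply it in non-big situations, e.g.\ via Lemma \ref{l_ebpf} when $K_X+B$ has intermediate Kodaira dimension), and ``a Koll\'ar-type vanishing for semi-ample divisors'' is not carried out and is not what the paper uses.

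The paper's resolution is a reduction step you are missing. If $\sum_{i=1}^k L_i$ is not big, one takes the morphism with connected fibres $f\colon \map X.Y.$ defined by the base-point-free linear system $|\sum_i L_i|$; every $L_i$ is nef and trivial on the fibres of $f$, hence descends to a Cartier divisor $L'_i=f^{*-1}L_i$ on $Y$ with $H^0(X,\sum c_iL_i)\simeq H^0(Y,\sum c_iL'_i)$. The essential input is then \cite[Theorem 4.1]{Ambro05} (the canonical bundle formula), which guarantees that each $L'_i$ is again of the form $a_i(K_Y+B'_i)$ for a Kawamata log terminal pair $(Y,B'_i)$, so the hypotheses of the proposition are preserved while $\sum_i L'_i$ becomes big on $Y$. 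After this replacement your Kawamata--Viehweg argument goes through verbatim: the divisor $G-iL_\ell-(K_X+B_\ell)$ is $\mathbb Q$-linearly equivalent to $\sum_i L_i$ plus a nonnegative combination of the $L_j$, hence big and nef. Aside from this omission, your argument matches the paper's (the paper uses the full Koszul complex on $V=H^0(X,\ring X.(L_\ell))$ rather than $n+1$ sections without common zeros, an immaterial difference).
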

\begin{proof}
We first assume that $\sum_{i=1}^k L_i$ is not big. Since the linear system $|\sum_{i=1}^k L_i|$ is base point free, there exists a morphism with connected fibres $f\colon \map X.Y.$ onto a normal projective $Y$  such that $\sum_{i=1}^k L_i=f^*A$ for some very ample divisor $A$ on $Y$.   Since $L_1,\dots,L_k$ are nef, if $\xi$ is a curve contracted by $f$ then $L_i\cdot\xi=0$ for any $i=1,\dots,k$. 
In particular, since $|L_i|$ is base point free, and the restriction of $L_i$ to any fibre of $f$ is trivial, it follows that 
there exist Cartier divisors  $L'_1,\dots,L'_k$  such that $L_i=f^*L'_i$. By \cite[Theorem 4.1]{Ambro05}, it follows that $L'_i\sim_{\mathbb Q}a_i(K_{Y}+B'_i)$ for some $\mathbb Q$-divisor $B'_i$ such that 
$(X,B'_i)$ is Kawamata log terminal for any $i=1,\dots,k$. Note that the linear system $|L'_i|$ is base point free and that 
$H^0(X,\sum_{i=1}^k c_i L_i )\simeq H^0(Y,\sum_{i=1}^k c_iL'_i)$ for any non-negative integers $c_1,\dots,c_k$. Thus, after replacing $X$ by $Y$, $L_i$ by $L'_i$ and $B_i$ by $B'_i$, we may assume that $\sum_{i=i}^k L_i$ is big. 

Let $V=H^0(X,\ring X.( L_\ell ))$ and let $\mathcal V=V\otimes \ring X.$. Then
$$\mathcal V\otimes \ring X.(- L_\ell )\to \ring X.$$
is surjective. Thus, if $r=\dim V$, then  the sequence
$$\begin{aligned}
0=\wedge^{r+1}\mathcal V\otimes &\ring X.(-(r+1) L_\ell )\to \dots\\
&\to \wedge^2\mathcal V\otimes \ring X.(-2 L_\ell )\to \mathcal V\otimes \ring X.(- L_\ell )\to \ring X.\to 0
\end{aligned}
$$
is exact. Twisting by $\ring X.(G+ L_\ell )$ gives
$$\begin{aligned}
0\to & \wedge^{r}\mathcal V\otimes \ring X.(G-(r-1) L_\ell )\to \dots \\
&\to \wedge^2\mathcal V\otimes \ring X.(G-L_\ell )\to \mathcal V\otimes \ring X.(G)\to \ring X.(G+L_\ell )\to 0
\end{aligned}
$$

Since $\sum_{i=1}^kL_i$ is big, $b_\ell\ge n+2$ and $b_i\ge 1$ for $i\neq \ell$,  we have that
$$\begin{aligned}
\sum_{i\neq \ell} b_i L_i + (b_\ell - j) &L_\ell - (K_X+B_\ell)\\ 
&\sim_{\mathbb Q} 
\sum_{i\neq \ell} b_i L_i + (b_\ell - j) L_\ell -\frac 1 {a_\ell}  L_\ell \\
&\sim_{\mathbb Q}  \sum_{i=1}^k L_i + \sum_{i\neq \ell} (b_i-1)L_i+ \left (b_\ell - j - 1 - \frac 1 {a_\ell}\right )L_\ell
\end{aligned}
$$
is big and nef. Thus, Kawamata-Viehweg vanishing implies that
$$\begin{aligned}
H^j(X,\wedge^{j+1}\mathcal V &\otimes \ring X.(G-j L))=\wedge^{j+1}V\otimes H^j(X,\ring X.(G-j L_\ell ))\\
&=\wedge^{j+1}V\otimes H^j(X,\ring X.(\sum_{i\neq \ell} b_i L_i + (b_\ell - j) L_\ell  ))=0
\end{aligned}
$$
for any $j>0$.
Since 
$$H^0(X,\ring X.(G))\otimes H^0(X,\ring X.( L_\ell ))=H^0(X,\mathcal V\otimes \ring X.(G)),$$
the map
$$H^0(X,\ring X.(G))\otimes H^0(X,\ring X.( L_\ell ))\to H^0(X,\ring X.(G+ L_\ell ))$$
is surjective and the claim  follows. 
\end{proof}

\subsection{Adjoint Rings} In this section, we recall some basic notion about  adjoint rings.
 
\begin{definition}\label{d_adjointrings}
Let $X$ be a smooth projective variety and let $D_1,\dots,D_k$ be 
Cartier divisors on $X$. The \emph{adjoint ring associated to} $D_1,\dots,D_k$ is
$$R(X;D_1,\dots,D_k)=\bigoplus_{(a_1,\dots,a_k)\in \mathbb Z_{\ge 0}^k}H^0(X,\ring X.(\sum_{i=1}^k a_iD_i)).$$
We say that $R=R(X;D_1,\dots,D_k)$ is \emph{generated in degree} $m$ if $R$ is 
generated by sections of $H^0(X,\ring X.(\sum_{i=1}^k a_iD_i))$, for $a_1,\dots,a_k\in \{0,\dots,m\}$. 
\end{definition}

\begin{remark} The definition of adjoint rings can be easily extended to $\mathbb Q$-divisors $D_1,\dots,D_k$ on a smooth projective variety $X$, but it will not be used in this paper in this generality (e.g. see \cite{CL10a} for more details).
\end{remark}

\begin{remark}\label{r_b} 
Let $X$ be a smooth projective variety and let $B$ be a $\mathbb Q$-divisor on $X$ such that $(X,B)$ is Kawamata log terminal. Let $q$ be a positive integer such that $qB$ is Cartier. 
Then $R=R(X,q(K_X+B))$ is generated in degree $m$ if and only if the sections of 
$$\bigoplus_{a\le m}H^0(X,aq(K_X+B))$$  span $R$. 
\end{remark}

\begin{proposition}\label{p_b}
Let $X$ be a smooth projective variety of dimension $n$. Let $B_1,\dots,B_k$ be $\mathbb Q$-divisors on $X$ and let $a_1,\dots,a_k$ be positive integers such that $(X,B_i)$ is Kawamata log terminal and there exist
Cartier divisors $L_1,\dots,L_k$   such that  $L_i\sim_{\mathbb Q}a(K_X+B_i)$ and the linear system 
$|L_i|$ is base point free, for $i=1,\dots,k$. 

Then $R(X;L_1,\dots,L_k)$ is generated in degree $n+2$.
\end{proposition}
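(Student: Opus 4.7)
The plan is to reduce multi-indices coordinate by coordinate by repeatedly applying Proposition \ref{p_mumford}. Concretely, for any multi-index $(c_1,\dots,c_k) \in \mathbb Z_{\ge 0}^k$, I would prove by induction on $\sum c_i$ that every element of $H^0(X,\mathcal O_X(\sum c_i L_i))$ lies in the subring generated by the pieces $H^0(X,\mathcal O_X(\sum a_i L_i))$ with all $a_i \in \{0,\dots,n+2\}$.

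For the inductive step, suppose some coordinate exceeds $n+2$, say $c_\ell \ge n+3$. Set $G = \sum_{i=1}^k c_i L_i - L_\ell$, so $G = \sum b_i L_i$ with $b_\ell = c_\ell - 1 \ge n+2 > n+1$ and $b_i = c_i$ for $i \ne \ell$. Proposition \ref{p_mumford} then gives the surjectivity of
$$
H^0(X,\mathcal O_X(G)) \otimes H^0(X,\mathcal O_X(L_\ell)) \longrightarrow H^0(X,\mathcal O_X(G + L_\ell)) = H^0\bigl(X,\mathcal O_X(\textstyle\sum c_i L_i)\bigr).
$$
Any section on the right is therefore a finite sum of products $s\cdot t$ with $s \in H^0(X,\mathcal O_X(G))$ and $t \in H^0(X,\mathcal O_X(L_\ell))$. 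Since $L_\ell$ corresponds to the multi-index with a single $1$ in the $\ell$-th place, which lies in $\{0,\dots,n+2\}^k$, the factor $t$ is already a generator of the desired degree. The factor $s$ corresponds to a multi-index whose total degree has dropped by $1$, so by the inductive hypothesis it lies in the subring generated by multi-indices in $\{0,\dots,n+2\}^k$.

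The base case is any tuple $(c_1,\dots,c_k)$ with $\max_i c_i \le n+2$, where there is nothing to prove. The reduction strictly decreases $\sum c_i$ at each step, so the induction terminates. Assembling these statements over all multi-indices yields that $R(X;L_1,\dots,L_k)$ is generated in degree $n+2$.

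The only real subtlety is verifying that the coefficient condition $b_\ell > n+1$ of Proposition \ref{p_mumford} can always be arranged when some $c_\ell \ge n+3$; this is essentially a bookkeeping check, and otherwise the argument is a direct application of Castelnuovo--Mumford-style regularity generation, with Proposition \ref{p_mumford} playing the role of the classical multiplication surjectivity.
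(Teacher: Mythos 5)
Your proof is correct and is essentially identical to the paper's: both reduce a multi-index with some coordinate exceeding $n+2$ by peeling off one copy of $L_\ell$ and invoking the multiplication surjectivity of Proposition \ref{p_mumford} (with $b_\ell = c_\ell - 1 \ge n+2 > n+1$), then induct on total degree. The only shared cosmetic point is that Proposition \ref{p_mumford} is stated for positive coefficients, so indices with $c_i=0$ should simply be omitted from the sum; this affects the paper's write-up equally and is harmless.
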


\begin{proof} 
Let $G=\sum_{i=1}^k m_i L_i$ for some integers $m_1,\dots,m_k\ge 0$ and assume that there exists $\ell\in\{1,\dots,k\}$ such that $m_\ell> n+2$. Then 
Proposition \ref{p_mumford} implies that 
$$H^0(X, \ring X.(G-L_\ell)\otimes H^0(X, \ring X.(L_\ell))\to H^0(X,\ring X.(G))$$
is surjective and the claim follows. 
\end{proof}

\begin{lemma}\label{l_b}
Let $X$ be a smooth projective variety and let $D$ be a Cartier divisor on $X$. Assume that $R(X,D)$ is generated in degree $m$ and let $q=m!$.

Then the stable base locus of $D$ is equal to the base locus of the linear system $|qD|$. 
In addition,  if $F=\bfix(D)$ (cf. Definition \ref{d_z}), then  $qF$ is a Cartier divisor. 
\end{lemma}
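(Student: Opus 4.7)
The plan is to exploit the defining consequence of generation in degree $m$: every section of $H^0(X,\ring X.(kD))$ is a $\mathbb C$-linear combination of monomials $\prod_i s_i$, where $s_i\in H^0(X,\ring X.(a_iD))$ with $a_i\in\{1,\dots,m\}$ and $\sum_i a_i=k$. The choice $q=m!$ is tailored so that every $a\le m$ divides $q$, which lets us raise $s\in H^0(X,\ring X.(aD))$ to the power $q/a$ to land in $H^0(X,\ring X.(qD))$. Both halves of the lemma will drop out of combining these two observations.

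For the stable base locus statement, the inclusion $\SBs(D)\subseteq \bs|qD|$ is immediate from the definition of the stable base locus. For the reverse, I would fix $x\in\bs|qD|$ and an arbitrary $k\ge 1$ with $|kD|\ne\emptyset$, take any $s\in H^0(X,\ring X.(kD))$, and decompose it into monomials as above. For each factor $s_i\in H^0(X,\ring X.(a_iD))$ the section $s_i^{q/a_i}$ lies in $H^0(X,\ring X.(qD))$ and vanishes at $x$ by hypothesis; in a local trivialization of $\ring X.(a_iD)$ near $x$ this forces $s_i(x)=0$. Hence every monomial vanishes at $x$, and so does $s$, giving $x\in\bs|kD|$ for every such $k$ and therefore $x\in\SBs(D)$.

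For the Cartier statement, I would fix a prime divisor $E$ on $X$ and set $\phi(a)=\mult_E(\fix|aD|)$ for positive integers $a$ with $|aD|\ne\emptyset$. Multiplying sections gives subadditivity $\phi(a+b)\le\phi(a)+\phi(b)$, so by Fekete's lemma $\mult_E(\bfix(D))=\inf_k \phi(k)/k$. I claim this infimum equals $\phi(q)/q$. For $a\le m$, the divisibility $a\mid q$ together with subadditivity yield $\phi(q)\le(q/a)\phi(a)$, i.e.\ $\phi(a)\ge (a/q)\phi(q)$. For a general $k$ and any $s\in H^0(X,\ring X.(kD))$, decomposing $s=\sum_j\prod_i s_{i,j}$ and using that $\mult_E$ of a sum is at least the minimum while $\mult_E$ of a product is the sum, I obtain
$$\mult_E(s)\ \ge\ \min_j\sum_i \phi(a_{i,j})\ \ge\ \min_j\sum_i\frac{a_{i,j}}{q}\phi(q)\ =\ \frac{k}{q}\phi(q),$$
hence $\phi(k)/k\ge \phi(q)/q$. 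Running this componentwise over all primes $E$ yields $\bfix(D)=(1/q)\fix|qD|$, so $qF=\fix|qD|$, which is Cartier because $X$ is smooth.

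The only mildly \emph{subtle} step is the implication $s_i^{q/a_i}(x)=0\Rightarrow s_i(x)=0$: sections of a line bundle are not literally functions, but after passing to a local trivialization of $\ring X.(a_iD)$ near $x$ the assertion becomes $\alpha^{q/a_i}=0$ in $\mathbb C$, which forces $\alpha=s_i(x)=0$. Everything else is routine bookkeeping around subadditivity of the fixed part.
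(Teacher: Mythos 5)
Your proof is correct and follows essentially the same route as the paper: the divisibility $a\mid q$ for all $a\le m$ plus the power trick $s\mapsto s^{q/a}$ reduces everything to degree $q$, and generation in degree $m$ propagates the vanishing (resp.\ the multiplicity bound) to all degrees. The paper dismisses the Cartier statement with ``the proof of the second claim is analogous''; your subadditivity/Fekete argument showing $\bfix(D)=\tfrac1q\fix|qD|$ is precisely the intended filling-in of that remark.
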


\begin{proof}
If $x$ is a point contained in the base locus of the linear system $|qD|$
and $m'\le m$ is a positive integer, then, since $m'$ divides $q$,  it follows that 
 any section of $H^0(X,\ring X.(m'D))$ vanishes at $x$. Thus, by assumption, any section of $H^0(X,\ring X.(\ell D))$ vanishes at $x$, for any positive integer $\ell$. In particular, $x$ is contained in the stable base locus of $D$ and the first claim follows. 
The proof of the second claim is analogous. 
\end{proof}

\subsection{Surface and threefolds singularities} 
In this section, we recall a few known facts about Kawamata log terminal singularities in dimension $2$ and terminal singularities in dimension $3$.

\begin{lemma}\label{l_fact}

Let $(X,B)$ be a log smooth surface such that  $\rfdown B.=0$ and $K_X+B$ is pseudo-effective. 
 Let $f\colon\map X.Y.$ be the log terminal model of $(X,B)$. 
 
 Then there exist birational morphisms $g\colon \map X.Z.$ and $h\colon \map Z.Y.$ which factorize $f$ and such that 
 \begin{enumerate}
 \item $g$ is a sequence of smooth blow-ups;
 \item $h$ contracts only divisors contained in the support of $g_*B$.
 \end{enumerate}
\end{lemma}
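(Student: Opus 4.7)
The plan is to factor $f$ through the minimal resolution of $Y$. Since $Y$ is $\mathbb{Q}$-factorial by definition of a log terminal model, $K_Y$ is $\mathbb{Q}$-Cartier, and so $Y$ admits a unique minimal resolution $\mu\colon Y_0 \to Y$, characterized by $K_{Y_0}$ being $\mu$-nef (equivalently, by having no $\mu$-exceptional $(-1)$-curve). By the classical fact that any smooth resolution of a normal surface factors uniquely through the minimal resolution, $f$ decomposes as $f = \mu \circ f_1$ with $f_1\colon X \to Y_0$ a birational morphism between smooth projective surfaces. I set $g := f_1$ and $h := \mu$. Part (1) then follows from Zariski's theorem that any birational morphism between smooth projective surfaces is a composition of blow-ups of closed points.

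The real content is part (2). The key input is a standard bound on the discrepancies of a minimal resolution: writing $K_{Y_0} = \mu^{*}K_Y + \sum a_i E_i$ with the $E_i$ the $\mu$-exceptional primes, the $\mu$-nefness of $K_{Y_0}$ gives $\sum_i a_i (E_i \cdot E_j) \ge 0$ for all $j$, and, combined with the negative-definiteness of the intersection form on $\mu$-exceptional curves, this forces $a_i \le 0$. Equivalently, $a(E_0, Y, 0) \le 0$ for every $\mu$-exceptional prime $E_0 \subset Y_0$.

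To conclude, fix such an $E_0$ and let $\tilde E_0 \subset X$ be its strict transform under $g$; this is an $f$-exceptional prime divisor defining the same divisorial valuation as $E_0$. Using $f_{*}B \ge 0$,
$$a(\tilde E_0, Y, f_{*}B) = a(E_0, Y, 0) - \ord_{E_0}(\mu^{*}(f_{*}B)) \le a(E_0, Y, 0) \le 0,$$
whereas the $(K_X+B)$-negativity of $f$ (which holds because $f$ is a log terminal model) gives
$$a(\tilde E_0, X, B) < a(\tilde E_0, Y, f_{*}B) \le 0.$$
If $\tilde E_0 \not\subset \Supp B$, then $a(\tilde E_0, X, B) = 0$, contradicting the strict inequality. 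Therefore $\tilde E_0 \subset \Supp B$, so $E_0 = g_{*}\tilde E_0 \subset \Supp g_{*}B$, establishing (2).

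The potentially delicate step is the factorization through the minimal resolution, but this is entirely classical in dimension two; the real point of the lemma is that the $(K_X+B)$-negativity built into the definition of a log terminal model propagates to force every minimal-resolution exceptional divisor to be supported in $B$.
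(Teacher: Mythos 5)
Your proof is correct and follows essentially the same route as the paper: factor $f$ through the minimal resolution of $Y$, use that minimal-resolution exceptional divisors have non-positive discrepancy over $Y$, and then invoke the $(K_X+B)$-negativity of the log terminal model to force each such divisor into the support of the boundary. The only cosmetic difference is that you compute the discrepancy inequality on $X$ directly, whereas the paper phrases it via $h$ being a log terminal model of $(Z, g_*B)$; both amount to the same comparison of discrepancies.
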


\begin{proof}
Let $h\colon \map Z .Y.$ be the minimal resolution of $Y$. Then, since $X$ is smooth, there exists a morphism $g\colon\map X.Z.$, which is a  
sequence of smooth blow-ups and such that $f=h\circ g$.  Let $C=g_*B$. Then, $h$ is the log terminal model of $(Z,C)$ and since $Z$ is the minimal resolution, it follows that $h$ does not contract any $(-1)$-curve. In particular, 
$a(F,Y)\le 0$ for any curve $F$ contracted by $h$. Thus, $a(F,Y,f_*B)\le 0$ and since $h$ is $(K_Z+C)$-negative, it follows that 
$a(F,Z,C)<0$. Therefore $F$ is contained in the support of $C$.  
\end{proof}

We proceed by bounding the index of a Kawamata log terminal surface with  respect to the graph of its minimal resolution:
\begin{proposition}\label{p_kltsurface} Let 
$(S,p)$ be the germ of a Kawamata log terminal surface and let $f\colon T\to S$ be the minimal resolution of $S$.
Assume that $E_1,\dots,E_k$ are the irreducible components of the exceptional divisor of $f$ and let $\varepsilon>0$ be  such that $a(E_i,S)\ge -1+\varepsilon$ for all $i=1,\dots,k$. 

Then there exists a constant $r=r(k,\varepsilon)$, depending only on $k$ and $\varepsilon$, such that 
$rD$ is Cartier for all Weil divisors $D$ on $S$. 
\end{proposition}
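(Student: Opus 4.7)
The plan is to reduce the statement to a bound on the absolute value of the determinant of the intersection matrix $M=(E_i\cdot E_j)$ of the exceptional curves, since by a classical computation the local divisor class group of $(S,p)$ is isomorphic to $\mathrm{coker}\bigl(M\colon\mathbb Z^k\to\mathbb Z^k\bigr)$, a finite abelian group of order $|\det M|$. Its exponent then divides $|\det M|$, so it will suffice to bound $|\det M|$ by a function of $k$ and $\varepsilon$.

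First I would invoke the standard structural fact that for a Kawamata log terminal surface germ, the exceptional divisor of the minimal resolution is a tree of smooth rational curves meeting transversally (see \cite{KM98}). Consequently $E_i\cdot E_j\in\{0,1\}$ for $i\ne j$, while $n_i:=-E_i^2\ge 2$ since $f$ is the minimal resolution.

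Next, setting $b_i:=-a(E_i,S)\in[0,1-\varepsilon]$ (non-negativity follows from the minimality of $f$ together with the Stieltjes property of $-M$), the identity $f^*K_S=K_T+\sum_i b_iE_i$, intersected with $E_i$ and combined with the adjunction identity $K_T\cdot E_i=n_i-2$ on the smooth rational curve $E_i$, yields
$$n_i(1-b_i)\;=\;2-\sum_{j\ne i}b_j(E_j\cdot E_i)\;\le\;2.$$
Since $1-b_i\ge \varepsilon$, this forces $n_i\le 2/\varepsilon$ for every $i$. Hence $M$ is a symmetric integer $k\times k$ matrix with diagonal entries in $\{-\lfloor 2/\varepsilon\rfloor,\ldots,-2\}$ and off-diagonal entries in $\{0,1\}$; the Leibniz expansion then gives $|\det M|\le r(k,\varepsilon)$ for an explicit function $r$, which is the value we output.

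Finally, to conclude, for any Weil divisor $D$ on $S$ one writes $f^*D=\tilde D+\sum_i c_iE_i$, where the coefficients $c_i$ are uniquely determined by the system $Mc=-(\tilde D\cdot E_j)_j$, whose right-hand side is integral. Cramer's rule then forces $|\det M|\,c_i\in\mathbb Z$ for all $i$, so $|\det M|\,f^*D$ is a Cartier divisor on the smooth surface $T$, and hence $|\det M|\,D$ is Cartier on $S$ near $p$. The main obstacle is the structural input about the dual graph; once it is accepted as a classical consequence of the theory of klt surface singularities, the remainder is elementary linear algebra combined with adjunction on each $E_i$.
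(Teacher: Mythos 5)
Your proof is correct, but it takes a genuinely different route from the paper. The paper's own argument invokes the description of a Kawamata log terminal surface germ as a quotient $\mathbb C^2/G$ by a finite subgroup of $\GL(2,\mathbb C)$ without quasi-reflections, notes that $|G|\,D$ is then Cartier for every Weil divisor $D$, and quotes Brieskorn's bound $|G|\le 120k^2/\varepsilon^3$ in terms of the resolution graph. You instead work entirely on the minimal resolution: the adjunction computation $n_i(1-b_i)=2-\sum_{j\ne i}b_j(E_j\cdot E_i)\le 2$ correctly yields $-E_i^2\le 2/\varepsilon$, and the exponent of the local class group $\mathrm{coker}(M)$ is then bounded by $|\det M|\le k!\,\lfloor 2/\varepsilon\rfloor^k$. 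This is more elementary and self-contained --- it replaces Brieskorn's group-theoretic input by linear algebra and Cramer's rule --- at the cost of a much worse constant ($k!(2/\varepsilon)^k$ versus $120k^2/\varepsilon^3$), which is harmless since the paper explicitly disclaims sharpness of its bounds. Two points deserve to be made explicit. First, the identification of the local class group with $\mathrm{coker}(M)$, and equivalently the descent of $|\det M|\,f^*D$ from $T$ to $S$ in your last paragraph, is not automatic for an arbitrary normal surface germ: a Cartier divisor on $T$ numerically trivial on the exceptional fibre descends to $S$ because $R^1f_*\ring T.=0$, i.e.\ because klt surface singularities are rational; you should say so. Second, your structural input (the exceptional set is a simple normal crossings tree of smooth rational curves, with $b_i\ge 0$ on the minimal resolution) and the paper's (the quotient-singularity description) both come from the same classification in \cite{KM98}, so neither argument is strictly lighter in prerequisites. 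With the rationality remark added, your argument is a complete and valid alternative proof.
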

\begin{proof}
The germ of a Kawamata log terminal surface $(S,p)$ is given by the quotient of $\mathbb C^2$ by a finite subgroup $G$ of $\GL(2,\mathbb C)$, without quasi-reflections. Thus, it is enough to bound the order of $G$, depending on the graph associated to the minimal resolution of $(S,p)$. 
It follows from \cite[pag. 348]{Brieskorn68} that the order of $G$ is at most  $r=120k^2/\varepsilon^3$. 
\end{proof}

We now consider the germ $(X,p)$  of a terminal singularities in dimension $3$. The {\em index} of $X$ at  $p$ is the smallest positive integer $r=r(X,p)$ such that $rK_X$ is Cartier. In addition, it follows from the classification of terminal singularities \cite{Mori85}, that 
there exists a deformation of $(X,p)$ into a variety with $k\ge 1$ terminal singularities $p_1,\dots,p_k$ which are isolated cyclic quotient singularities of index $r(p_i)$. The set $\{p_1,\dots,p_k\}$ is called the {\em basket} $\mathcal B(X,p)$ of singularities of $X$ at $p$ \cite{Reid85}. The number $k$ is called the {\em axial weight} $\aw(X,p)$ of $(X,p)$. 
As in \cite{ChenHacon11}, we define 
$$\Xi(X,p)=\sum_{i=1}^k  r(p_i).$$
Thus, if $X$ is a projective variety of dimension $3$ and with terminal singularities, we may define
$$\Xi(X)=\sum_{p\in X} \Xi(X,p)\qquad \text{and}\qquad \aw(X)=\sum_{p\in X}\aw(X,p).$$ 
\begin{definition}
Let $X$ be a terminal projective variety of dimension $3$. Then a  $w$-resolution of $X$ is a sequence 

$$Y=\map X_N.\map .\dots. .X_0=X$$
such that 
\begin{enumerate}
\item $X_i$ has only terminal singularities for $i=1,\dots,N$ and $X_N$ has only Gorenstein singularities; 
\item the map $\map X_{i+1}.X_{i}.$ is a weighted blow-up at $P_i\in X_i$ with minimal discrepancy $1/m_i$ where $m_i$ is the index of $X_{i}$ at $P_i$, for $i=0,\dots,N-1$.
\end{enumerate}

\end{definition}

The following result is due to Hayakawa. 
\begin{theorem}
Let $X$ be a terminal projective threefold.

Then $X$ admits a $w$-resolution. \end{theorem}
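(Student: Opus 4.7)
The plan is to combine a local existence statement due to Hayakawa for a single terminal singularity with an inductive termination argument built from the basket invariants $\Xi$ and $\aw$ already introduced above.

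\textbf{Step 1 (local existence).} For a single terminal germ $(X_i,P_i)$ of index $m_i>1$, I would invoke Hayakawa's local construction: there exists a projective birational morphism $\pi_{i+1}\colon X_{i+1}\to X_i$ which is a weighted blow-up centered at $P_i$, extracts a unique exceptional prime divisor $E_{i+1}$ with discrepancy $a(E_{i+1},X_i)=1/m_i$, and such that $X_{i+1}$ has only terminal singularities in a neighbourhood of $E_{i+1}$. In the cyclic quotient case $\tfrac{1}{r}(a,-a,1)$ this is the Kawamata blow-up with weights $\tfrac{1}{r}(a,r-a,1)$, producing at most two cyclic quotient singularities whose indices strictly divide $r$. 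For the non-cyclic types $cA$, $cD$, $cE$, $cAx/2$, $cAx/4$, $cD/2$, $cD/3$, $cE/2$ in Mori's classification, one writes the germ as a weighted hypersurface and checks in each case, by a direct toric computation, that a suitable weighted blow-up is terminal and realizes minimal log discrepancy $1/m_i$.

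\textbf{Step 2 (global iteration).} Starting from $X_0=X$, at each stage I pick any non-Gorenstein terminal point $P_i\in X_i$ and apply the local blow-up of Step~1 to get $\pi_{i+1}\colon X_{i+1}\to X_i$. By construction every $X_i$ has only terminal singularities, every map in the sequence is a weighted blow-up with the prescribed minimal discrepancy, and a Gorenstein-terminal $X_N$ in the sense of condition (1) of the definition is produced as soon as $X_i$ has no non-Gorenstein terminal point.

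\textbf{Step 3 (termination).} To force the process to stop I would use the basket monovariant
\[
\Phi(X_i) \;=\; \sum_{P\in X_i}\bigl(\Xi(X_i,P)-\aw(X_i,P)\bigr)\;=\;\Xi(X_i)-\aw(X_i),
\]
a non-negative integer which vanishes exactly when every basket point of every terminal singularity has index~$1$, equivalently when $X_i$ is Gorenstein at every point. Using Reid's basket formula under general elephant deformation, together with the explicit Hayakawa blow-ups, one checks that at each step the basket points above $P_i$ split into basket points of strictly smaller index, so that $\Phi(X_{i+1})<\Phi(X_i)$. Since $\Phi\in\mathbb{Z}_{\ge 0}$, the sequence must terminate, yielding the desired $w$-resolution $Y=X_N\to\cdots\to X_0=X$.

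\textbf{Main obstacle.} The entire argument rests on Step~1, Hayakawa's local theorem: for every non-cyclic terminal threefold singularity one must exhibit a weighted blow-up of minimal discrepancy that preserves terminality, and this requires a lengthy case-by-case analysis through Mori's list, together with nontrivial Reid-type deformation arguments to control the basket of the points created on $X_{i+1}$. Granting the local result, Steps~2 and~3 are bookkeeping with the invariants $\Xi$ and $\aw$ recorded just before the statement.
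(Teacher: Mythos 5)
The paper does not prove this statement at all: it simply cites Hayakawa \cite[Theorem 6.1]{Hayakawa00}. Your proposal attempts to reconstruct that proof, and its architecture (local existence of a terminality-preserving weighted blow-up of minimal discrepancy, then a global termination argument) is indeed the right shape. You also correctly flag that the real content is the case-by-case local analysis through Mori's list.

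However, your Step 3 contains a genuine gap. The monovariant $\Phi=\Xi-\aw$ does \emph{not} strictly decrease, and the underlying claim that the basket points above $P_i$ split into points of strictly smaller index is false. This is visible in the paper's own computation in the proof of Lemma \ref{l_xidep}: for a point of type $cA/r$ with axial weight $\aw(Z,p)$, the minimal-discrepancy weighted blow-up produces cyclic quotient points $Q_1,Q_2$ of indices $a,b$ with $a+b=kr$ for some $1\le k\le\aw(Z,p)$, plus (if $k<\aw(Z,p)$) a residual $cA/r$ point of axial weight $\aw(Z,p)-k$. For $k\ge 2$ one can have $a>r$ or $b>r$, so indices can strictly \emph{increase}; moreover $\Xi$ is exactly preserved in this case while $\aw$ changes by $2-k$, so $\Phi(X_{i+1})-\Phi(X_i)=k-2$, which is $0$ for $k=2$ and positive for $k\ge 3$. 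Hence $\Phi$ is not a valid termination invariant. A correct termination argument needs something more delicate, e.g.\ a well-founded (lexicographic or multiset) induction: the non-cyclic residual point has strictly smaller axial weight at each step, so after finitely many steps only cyclic quotient points remain, and for those the Kawamata blow-up of $\tfrac1r(a,-a,1)$ does produce points of indices $a$ and $r-a$, both strictly less than $r$, which forces termination. As written, your Step 3 would not compile into a proof; the rest of the outline is consistent with Hayakawa's approach but, as you acknowledge, delegates all the substance to his local classification.
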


\begin{proof}
See \cite[Theorem 6.1]{Hayakawa00}.
\end{proof}

Given $X$ as above, we define $\dep (X)$ the {\em depth}  of $X$ to be the minimum length of any $w$-resolution of $X$.

\begin{proposition}\label{t_ch}
Let $X,X'$ be terminal projective varieties of dimension $3$. Then
\begin{enumerate}
\item If $\rmap X.X'.$ is a  a flip, then $\dep (X)>\dep (X')$.

\item If $\map X. X'.$ is an extremal divisorial contraction to a curve, then
$\dep (X)\ge \dep (X')$. 
\item If $\map X.X'.$ is an extremal divisorial contraction to a point, then 
$\dep(X)\ge \dep(X')-1$. 
\end{enumerate}
\end{proposition}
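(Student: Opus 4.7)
The strategy in each part is to match a $w$-resolution of one side to a $w$-resolution of the other via the given birational map, controlling the change in length. The key ingredients are the classification of extremal divisorial contractions of terminal threefolds (Mori, Kawamata, and others), Hayakawa's explicit description of $w$-blowups underlying the previous theorem, and, for the flip case, Mori's classification of terminal 3-fold flips. I will handle parts (3) and (2) first, leaving the flip case (1) for last.

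For (3), let $f\colon X \to X'$ be the extremal divisorial contraction with exceptional divisor $E$ contracted to a point $p' \in X'$. If the discrepancy $a(E,X')$ equals the minimal possible value $1/m$, where $m$ is the index of $X'$ at $p'$, then $f$ is itself a $w$-blowup of $X'$ at $p'$, and prepending $f$ to a $w$-resolution of $X$ of length $\dep(X)$ gives a $w$-resolution of $X'$ of length $\dep(X)+1$; hence $\dep(X') \leq \dep(X)+1$, which is the desired bound. If instead $a(E,X') > 1/m$, I would use Hayakawa's theorem to produce a separate $w$-blowup $g\colon X'' \to X'$ at $p'$ and compare $X$ with $X''$ via a relative $\mathbb{Q}$-factorial MMP over $X'$; an inductive argument invoking parts (1) and (2) on the steps of this relative MMP then closes the bound. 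Part (2) is analogous but easier: for $f\colon X \to X'$ contracting $E$ to a curve $C \subset X'$, the known classification of such contractions shows that $X$ and $X'$ have identical non-Gorenstein singularities away from $E$ and $C$ respectively, so a $w$-resolution of $X$ of length $\dep(X)$ is trivial in a neighborhood of $E$ and descends to a $w$-resolution of $X'$ of length at most $\dep(X)$.

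Part (1), the flip case, is the principal obstacle. Given a flip $X \dashrightarrow X'$ with flipping and flipped contractions to a common base $Z$, the crucial input is that Mori's classification of terminal 3-fold flips shows the basket $\mathcal{B}(X')$ is strictly smaller than $\mathcal{B}(X)$ in the partial order used in the termination of 3-fold flips (so in particular $\Xi(X') < \Xi(X)$ and $\aw(X') \leq \aw(X)$). To convert this into the strict inequality $\dep(X) > \dep(X')$, I would take any $w$-resolution $Y' \to X'$ of length $\dep(X')$, choose a common resolution of $Y'$ and $X$, and run a suitable relative MMP over $Z$ that extracts a sequence of $w$-blowups yielding a model over $X$. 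The basket-strict-decrease statement forces at least one extra $w$-blowup step beyond those of $Y' \to X'$, producing a $w$-resolution of $X$ of length at least $\dep(X')+1$, hence $\dep(X) > \dep(X')$. The principal technical difficulty will be ensuring that each step of the constructed relative MMP can be realized as a weighted blow-up of minimal discrepancy (as required by the definition of a $w$-resolution); this I expect to handle locally by applying Hayakawa's theorem at each extracted singular point.
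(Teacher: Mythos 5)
The paper does not actually prove this proposition: its ``proof'' is the citation \cite[Propositions 2.15, 3.8, 3.9]{ChenHacon11}, and your sketch does resemble Chen--Hacon's real arguments in outline (Hayakawa $w$-blowups at non-Gorenstein points, relative MMPs over the base, induction on depth). Still, there are genuine gaps. The most serious is part (2). For a divisorial contraction $f\colon X\to X'$ of $E$ to a curve $C$, a $w$-resolution of $X$ does not ``descend'' to one of $X'$: a $w$-resolution of $X'$ is by definition a chain of weighted blow-ups of minimal discrepancy at \emph{points} of the successive models, and the composite $Y\to X\to X'$ is no such chain because $f$ itself is not a $w$-blowup. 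Moreover, the non-Gorenstein points of $X'$ lying on $C$ (and of $X$ lying on $E$) are exactly where the difficulty sits, so ``identical singularities away from $E$ and $C$'' does not help. Handling precisely this case --- factoring a divisorial contraction to a curve through non-Gorenstein points into flips, flops and $w$-blowdowns, and only then reading off the depth inequality --- is the main content of \cite{ChenHacon11}; it is not the easy case.

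For part (1), the fact you want to quote --- that the basket strictly decreases under a flip, so that $\Xi(X')<\Xi(X)$ --- is not an available consequence of Mori's or Koll\'ar--Mori's classification; what one knows is that discrepancies do not decrease, $a(E,X')\ge a(E,X)$ with strict inequality over the flipped locus, which controls the difficulty $d$ rather than $\Xi$. Even granting it, $\Xi$ and $\dep$ are related only by the one-sided bound $\Xi\le 2\dep$ (Lemma \ref{l_xidep}), so a strict drop in $\Xi$ does not force a strict drop in $\dep$; the sentence ``the basket-strict-decrease forces at least one extra $w$-blowup step'' asserts the whole theorem without proving it. The argument that works is different: a flipping curve must pass through a point of index $m>1$ (Gorenstein terminal threefolds admit no flipping contractions, which is the base of the induction); one performs a Hayakawa $w$-blowup $Y\to X$ there chosen so that $\dep(Y)=\dep(X)-1$, runs the relative MMP of $Y$ over the base $Z$, and shows its output dominates $X'$ by a (possibly trivial) $w$-blowdown, whence $\dep(X')\le\dep(Y)<\dep(X)$. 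Finally, your parts (1)--(3) invoke one another, so the simultaneous induction needs an explicit well-founded ordering (Chen--Hacon induct on $\dep$), which your sketch does not set up.
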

\begin{proof}
See \cite[Proposition 2.15, 3.8, 3.9]{ChenHacon11}.
\end{proof}

\section{Bounding threefold terminal singularities}\label{s_bound}

The aim of this section is to give a bound on the singularities of a minimal projective threefold depending on the topology of its resolution. More specifically, we 
show that we can bound the sum of the  indices  of all the points in all the baskets  of $Y$  by a constant which depends only on the Picard number $\rho(X)$ of $X$.

We first show a bound on the number of flips for the minimal model program of a smooth projective threefold $X$:

\begin{lemma}\label{l_number}
Let $X$ be a smooth projective threefold. 

Then both the number of divisorial contractions and the number of flips in the minimal model program of $X$ are bounded by $\rho(X)$.  
In addition, if $Y$ is the minimal model of $X$ then the number of points of $Y$ with index greater than one is also bounded by $\rho(X)$. 
\end{lemma}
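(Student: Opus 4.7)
The plan is to track two monovariants simultaneously along the steps of an MMP
$X = X_0 \dashrightarrow X_1 \dashrightarrow \dots \dashrightarrow X_n = Y$: the Picard number $\rho(X_i)$ and the depth $\dep(X_i)$ from Proposition \ref{t_ch}. The Picard number drops by one at every divisorial contraction and is preserved by flips, while by Proposition \ref{t_ch} the depth strictly decreases at every flip, does not increase at a divisorial contraction to a curve, and increases by at most one at a divisorial contraction to a point.

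For the first bound, let $d$ be the total number of divisorial contractions and $f$ the number of flips. Counting $\rho$ gives $d = \rho(X) - \rho(Y) \le \rho(X)$, which in particular bounds the number of divisorial contractions of each type by $\rho(X)$. For the flips, I would use that $X$ is smooth so $\dep(X) = 0$ (take the trivial $w$-resolution of length zero), together with $\dep(X_i) \ge 0$ throughout. Letting $d_p$ denote the number of divisorial contractions to a point and telescoping the inequalities above yields
$$0 \le \dep(Y) \le \dep(X) + d_p - f = d_p - f,$$
and hence $f \le d_p \le \rho(X)$.

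For the second statement, I would show that the number of points of $Y$ with index greater than one is at most $\dep(Y)$, which is $\le \rho(X)$ by the previous paragraph. Any $w$-resolution of $Y$ must, for each non-Gorenstein point $p \in Y$, include at least one weighted blow-up at a closed point lying over $p$; otherwise nothing modifies a neighbourhood of $p$ and the terminal index of $Y$ at $p$ persists in $X_N$, contradicting that $X_N$ is Gorenstein. Since distinct non-Gorenstein points of $Y$ have disjoint analytic neighbourhoods and each weighted blow-up is centred at a single closed point, the blow-ups associated to different non-Gorenstein points of $Y$ are distinct. Hence the length of any $w$-resolution is at least the number of non-Gorenstein points of $Y$, giving the desired bound.

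The main obstacle, and the only place where more than bookkeeping is needed, is the last step: certifying that the blow-ups in a $w$-resolution can be partitioned according to the non-Gorenstein point of $Y$ they sit over, and that the local contribution above each such point is at least one. This reduces to a local analysis of weighted blow-ups on terminal threefold germs in the style of Hayakawa and Chen--Hacon, which one can invoke directly by restricting the $w$-resolution to disjoint analytic neighbourhoods of the non-Gorenstein points of $Y$.
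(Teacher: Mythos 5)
Your argument is correct, but it takes a genuinely different route from the paper for everything beyond the (identical, trivial) bound $d=\rho(X)-\rho(Y)\le\rho(X)$ on divisorial contractions. The paper's monovariant is Shokurov's difficulty $d(X_i)=\#\{\nu \mid a(\nu,X_i)<1,\ \nu\ \text{exceptional over}\ X_i\}$: it vanishes on the smooth $X$, goes up by at most one at each divisorial contraction, drops by at least one at each flip, and stays non-negative, which bounds the number of flips by the number of divisorial contractions; the count of non-Gorenstein points of $Y$ is then bounded by $d(Y)$ via Kawamata's theorem \cite{Kawamata93} producing, for each point of index $r>1$, an exceptional divisor of discrepancy $1/r<1$. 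You instead run the same telescoping scheme with the Chen--Hacon depth from Proposition \ref{t_ch}, obtaining $0\le\dep(Y)\le d_p-f$ and hence $f\le d_p\le\rho(X)$, and you bound the number of non-Gorenstein points by $\dep(Y)$ by observing that any $w$-resolution must blow up at least one closed point over each non-Gorenstein point of $Y$ (each weighted blow-up being an isomorphism away from its centre). Both monovariants do the job; the paper's difficulty argument is the more classical one and needs only standard facts about terminal threefold extractions, while your depth argument has the advantage of unifying this lemma with Proposition \ref{p_2b2} (whose first half, $\dep(Y)\le\rho(X)$, you in effect re-derive) and of avoiding the appeal to \cite{Kawamata93}, at the cost of leaning on the $w$-resolution machinery of Hayakawa and \cite{ChenHacon11}. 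Your final worry about the "local analysis" needed to partition the blow-ups by the point of $Y$ they lie over is unfounded: since each centre $P_i$ is a single closed point with a well-defined image in $Y$, and a non-Gorenstein point with no centre over it would survive untouched into the Gorenstein model $X_N$, the counting is already complete as you stated it.
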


\begin{proof}
Clearly the number of divisorial contractions is bounded by $\rho(X)$. 
Let 
$$X=\rmap X_0.\rmap .\dots. .X_k=Y$$
be a  sequence of steps for the $K_X$-minimal model program of $X$, where $\rmap X.Y.$ is a minimal model of $X$. Let $d(X_i)$ be the Shokurov's difficulty of $X_i$, i.e.  
$$d(X_i)=\#\{\nu =\text{valuation}\mid a(\nu,X_i)<1,\quad \nu\text{ is exceptional over $X_i$}\}.$$
Then, since $X$ is smooth, we have $d(X)=0$. In addition, If $\rmap X_i.X_{i+1}$ is an extremal divisorial contraction, then 
$$d(X_i)\le d(X_{i-1})+1.$$
Finally, if $\rmap X_i.X_{i+1}$ is a flip, then 
$$d(X_i) \le d(X_{i-1})-1,$$
(e.g. see \cite{KM98}). Thus, the total number of flips is bounded by the number of divisorial contractions.

Finally, for each point $p\in Y$ such that $r(Y,p)>1$, the main result in \cite{Kawamata93} implies that  there exists a 
weighted blow-up $f\colon W\to Y$ of minimal discrepancy, i.e. if $E$ is the exceptional divisor of $f$ then 
$$a(E,Y)=\frac {1}{r(Y,p)}.$$
Thus, it follows that the number of such points is bounded by $d(Y)$. Thus, it is also bounded by $\rho(X)$ and the claim follows. 
\end{proof}

The following is a generalization of \cite[Proposition 2.13]{ChenHacon11}:
\begin{lemma}\label{l_xidep}
Let $Z$ be a terminal projective variety of dimension $3$. 

Then, 
$$\Xi(Z)\le 2 \dep (Z).$$
\end{lemma}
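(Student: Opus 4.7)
The plan is to induct on $\dep(Z)$, exploiting the fact that a single step of a $w$-resolution is a weighted blow-up at one point of $Z$ and is therefore an isomorphism outside that point, so the change in $\Xi$ localizes at the blown-up point.

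For the base case $\dep(Z) = 0$, every singular point of $Z$ is Gorenstein terminal. Such (cDV) germs admit $\mathbb{Q}$-Gorenstein smoothings, so no cyclic quotient singularity (which must have index $\ge 2$) survives a generic nearby fiber, and the basket at every such point is empty. Thus $\Xi(Z,p)=0$ at every singular point and $\Xi(Z)=0=2\dep(Z)$.

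For the inductive step with $\dep(Z)=N\ge 1$, I would fix a minimum-length $w$-resolution $Z=X_0\leftarrow X_1\leftarrow\cdots\leftarrow X_N$, let $\pi\colon X_1\to Z$ denote the first step (a weighted blow-up of minimal discrepancy $1/m$ at some $p\in Z$ of index $m$), and observe that the tail $X_N\to\cdots\to X_1$ is itself a $w$-resolution of $X_1$ of length $N-1$, so $\dep(X_1)\le N-1$ and by the inductive hypothesis $\Xi(X_1)\le 2(N-1)$. Because $\pi$ is an isomorphism over $Z\setminus\{p\}$ and therefore preserves baskets away from $p$,
\[
\Xi(Z)-\Xi(X_1)\;=\;\Xi(Z,p)\;-\;\sum_{q\in\pi^{-1}(p)}\Xi(X_1,q),
\]
so it remains to establish the per-step local bound
\[
\Xi(Z,p)\;-\;\sum_{q\in\pi^{-1}(p)}\Xi(X_1,q)\;\le\;2,
\]
which combined with the above gives $\Xi(Z)\le 2+2(N-1)=2N$.

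The per-step bound is the crux and the hard step. In the prototype case where $(Z,p)$ is a cyclic quotient $\frac{1}{r}(1,-1,a)$ with $\gcd(r,a)=1$, the Kawamata blow-up produces at most two new terminal points above $p$, of indices $a$ and $r-a$ respectively (smooth when the corresponding index equals $1$), and each surviving point contributes exactly its index to $\Xi$; hence the drop equals $0$ when $a,r-a\ge 2$, equals $1$ when exactly one of $a,r-a$ is $1$, and equals $2$ only in the extremal case $r=2$, $a=1$. For a general (non-cyclic-quotient) terminal germ I would reduce to this cyclic calculation via Reid's basket machinery: under a generic $\mathbb{Q}$-Gorenstein deformation the singularity breaks into its basket, the minimal-discrepancy extraction at $(Z,p)$ extends to a disjoint family of Kawamata blow-ups at the basket elements, and basket-invariance under deformation identifies the total drop in $\Xi$ with the sum of the drops computed in the cyclic case. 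The main obstacle is in making this deformation reduction rigorous across the full Mori classification of terminal 3-fold singularities (invoking the Hayakawa--Kawakita analysis of minimal-discrepancy extractions); this is the content of \cite[Proposition~2.13]{ChenHacon11} in the cases treated there, and the present lemma generalizes that analysis to arbitrary terminal $Z$.
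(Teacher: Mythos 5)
Your induction skeleton is exactly the paper's: induct on $\dep(Z)$, note that the Gorenstein base case gives $\Xi=0$, peel off the first weighted blow-up $\pi\colon X_1\to Z$ of a minimal $w$-resolution, and reduce everything to the local per-step bound $\Xi(Z,p)-\sum_{q\in\pi^{-1}(p)}\Xi(X_1,q)\le 2$. The cyclic quotient computation (indices $a$ and $r-a$, drop at most $2$) is also correct and appears implicitly in the paper. The gap is in how you handle the remaining terminal germs. Your proposed reduction --- deform $(Z,p)$ to its basket, claim the minimal-discrepancy extraction deforms to a disjoint union of Kawamata blow-ups at the basket points, and conclude that the drop in $\Xi$ equals the sum of the cyclic drops --- is not valid, and in fact computes the wrong number. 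Take a $cAx/4$ point of axial weight $w$: its basket is one point $\tfrac14(1,1,3)$ and $w-1$ points $\tfrac12(1,1,1)$, so the sum of the cyclic drops is $1+2(w-1)=2w-1$, which for $w\ge 2$ exceeds $2$ and would destroy the per-step bound. The actual drop, computed from Hayakawa's explicit weighted blow-up (the exceptional locus consists of one point of index $2k+3$ and one $cD/2$ point), is $-1$. So the extraction over the central fiber does not behave like the union of Kawamata blow-ups over the basket, and ``basket-invariance'' does not transport $\Xi$ of the blown-up threefold across the deformation: Reid's invariance concerns the basket of the germ itself, not the singularities of a divisorial extraction over it, and there is no flat family of extractions in sight.

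What the paper does instead is entirely case-by-case: it runs through Hayakawa's classification of minimal-discrepancy weighted blow-ups for each type of terminal germ ($cA/r$, $cAx/4$, $cD/2$, \dots), reads off the indices of the points lying over $p$ together with $\Xi(Z,p)$ (computed from the known basket of each type), and verifies the inequality directly; the $cA/r$ case gives drop $0$ and the $cAx/4$ case gives drop $-1$. To repair your argument you would need to replace the deformation step by this explicit verification (or by some other genuine control on the singularities appearing on a minimal-discrepancy extraction); as written, the crucial step rests on an identification that the paper's own examples refute.
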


\begin{proof}We proceed by induction on $\dep(Z)$. If $\dep (Z)=0$ then $Z$ is Gorenstein and 
$\Xi(Z)=0$. Thus, the claim follows. 

Assume now that  $\dep (Z)>0$.  
We claim  that if $f\colon \map Y.Z.$ is a weighted blow-up of minimal discrepancy at the point $p\in Z$, then 
$$\Xi(Y)\ge\Xi(Z)-2.$$

We first prove the Lemma, assuming the claim.  Let $f\colon \map Y.Z.$ be the first weighted blow-up of minimal discrepancy in a $w$-resolution of $Z$. Then, by definition
we have that $\dep(Z)=\dep (Y)+1$ and by induction, we have that $\Xi(Y)\le 2\dep(Y)$. Thus
$$\Xi(Z)\le \Xi(Y)+2\le 2\dep (Y)+2=2\dep(Z),$$
and the lemma follows. 

The claim follows from the classification of weighted blow-ups in \cite{Hayakawa99,Hayakawa00}. 
For example, assume that $(Z,p)$ is a point of type $cA/r$ and $\map Y.Z.$ is a weighted blow-up of minimal discrepancy. Then, by the proof of \cite[Theorem 6.4]{Hayakawa99},  there exist positive integers $a,b$ satisfying $a+b=kr$ with $k\le \aw(Z)$ and such that the only points of index greater than $1$ on $Y$ are the following: a point $Q_1$ which is a cyclic quotient singularity of index $a$, a point $Q_2$ which is a cyclic quotient singularity of index $b$ and, if $k<\aw(Z,p)$, a point $Q_3$ which is of type $cA/r$ of index $r$ and axial weight $\aw(Z)-k$. Thus, since $\Xi(Z,p)=r\aw(Z,p)$, it follows that 
$$\Xi(Y)-\Xi(Z)=a+b+r(\aw(Z,p) -k) - r\aw(Z,p)=0.$$  
Similarly, if $(Z,p)$ is a point of of type $cAx/4$ and $\map Y. Z.$ is the weighted blow-up given by \cite[Theorem 7.4]{Hayakawa99}, then there exists a positive integer $k$  such that the only points of index greater than $1$ on $Y$ are the following: a point $Q_1$ which is cyclic of quotient singularity of index $2k+3$ and, if $\aw(Z,p)>k+1$, a point $Q_2$ which is of type $cD/2$ and such that $\aw(Y,Q_2)=\aw(Z,p)-k-1$. Thus, since $\Xi(Z,p)=2\aw(Z,p)+2$, it follows that
$$\Xi(Y)-\Xi(Z)= 2(\aw(Z,p)-k-1)+2k+3 - (2\aw(Z,p)+2)=-1.$$
It is easy to check that if $(Z,p)$ is a singularity of different type, then Hayakawa's list of weighted blow-ups of minimal discrepancy implies that  the inequality is satisfied in all the cases. 
\end{proof}

\begin{proposition}\label{p_2b2}
Let $X$ be a smooth projective threefold and 
assume that 
$$X=\rmap X_0.\rmap .\dots. .X_k=Y$$ 
is a sequence of steps for the $K_X$-minimal model program of $X$. 

Then 
$$\Xi(Y)\le 2\rho(X).$$
In particular, the inequality holds if $Y$ is the minimal model of $X$. 
\end{proposition}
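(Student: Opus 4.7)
The plan is to control the depth $\dep(X_i)$ along the sequence of MMP steps and then invoke Lemma \ref{l_xidep} at the end. Since $X_0 = X$ is smooth, it is in particular Gorenstein, so $\dep(X_0) = 0$. I will track how $\dep$ changes after each birational step $X_i \dashrightarrow X_{i+1}$.

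Using Proposition \ref{t_ch}, after a flip we have $\dep(X_{i+1}) < \dep(X_i)$; after an extremal divisorial contraction to a curve, $\dep(X_{i+1}) \le \dep(X_i)$; and after an extremal divisorial contraction to a point, $\dep(X_{i+1}) \le \dep(X_i) + 1$. In particular, the only steps that can strictly \emph{increase} the depth are divisorial contractions to a point, and each such step increases $\dep$ by at most one.

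Next I would use Lemma \ref{l_number} to bound the total number of such steps: the overall number of divisorial contractions (to a curve or to a point) occurring in the MMP $X \dashrightarrow Y$ is at most $\rho(X)$, so in particular the number of divisorial contractions to a point is at most $\rho(X)$. Starting from $\dep(X_0) = 0$ and applying the three inequalities above step by step, the depth can only increase at divisorial contractions to points, and each such increase is at most $1$. Therefore
\[
\dep(Y) \le \rho(X).
\]
Finally, applying Lemma \ref{l_xidep} to the terminal threefold $Y$ gives $\Xi(Y) \le 2\dep(Y) \le 2\rho(X)$, as desired.

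There is no real obstacle here: once Lemmas \ref{l_number} and \ref{l_xidep} together with Proposition \ref{t_ch} are in hand, the statement follows by bookkeeping. The only point that deserves a moment of care is to note that flips and divisorial contractions to curves cannot undo the depth bound (they can only decrease $\dep$), so it suffices to count divisorial contractions to points, which are dominated by the total number of divisorial contractions and hence by $\rho(X)$.
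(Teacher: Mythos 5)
Your proof is correct and follows essentially the same route as the paper: both arguments start from $\dep(X)=0$ for the smooth $X$, use Proposition \ref{t_ch} to show that only divisorial contractions to points can increase the depth (by at most one each), bound their number by $\rho(X)$ via Lemma \ref{l_number}, and conclude with Lemma \ref{l_xidep}. No substantive differences.
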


\begin{proof}
Since $X$ is smooth, we have that $\dep(X)=0$.
By Proposition \ref{t_ch}, it follows that $\dep(X_i)\le \dep(X_{i-1})$ unless $\rmap X_{i+1}.X_{i}.$ is a divisorial contraction to a point and  in this case, we have $\dep(X_i)\le\dep(X_{i-1})+1$. Thus, $\dep(X_i)$ is bounded by the number of divisorial contractions in the first $i$ steps of the minimal model program of $X$. Thus, Lemma \ref{l_number}
implies that 
$\dep(Y)\le \rho(X)$. 

On the other hand, Lemma \ref{l_xidep} implies that 
$$\Xi(Y)\le 2\dep (Y).$$
Thus, the claim follows.
\end{proof}

\section{Effective Finite Generation}\label{s_fg}

We now proceed to show a bound on the degree of the generators of the adjoint ring associated to a Kawamata log terminal surface depending on the number of components of its boundary and the corresponding coefficients. 

\begin{proposition}\label{p_ld_surfaces}
Let $(X,B=\sum_{i=1}^p a_i S_i)$ be a log smooth projective surface, where $S_1,\dots,S_p$ are distinct prime divisors and
$\rfdown B.=0$. Let $a$ be a positive integer such that $aB$ is Cartier.

Then, there exists a positive integer $m=m(a,p)$, depending only on $a$ and $p$,  such that  $R(X,m(K_X+B))$ is generated in degree $4$. \end{proposition}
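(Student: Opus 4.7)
The plan is to pass to a log terminal model, bound its singularities in terms of $a$ and $p$, and conclude via effective base-point-freeness together with Mumford-type regularity. If $K_X+B$ is not pseudo-effective then $R(X,m(K_X+B))=\mathbb{C}$ is trivially generated, so I may assume $K_X+B$ is pseudo-effective. Since surface MMP is unconditional, there is a log terminal model $f\colon X\dasharrow Y$ of $(X,B)$ with $(Y,B_Y=f_*B)$ Kawamata log terminal and $K_Y+B_Y$ nef. By the negativity lemma $R(X,m(K_X+B))\cong R(Y,m(K_Y+B_Y))$ whenever both sides are defined, so it suffices to generate $R(Y,m(K_Y+B_Y))$ in degree $4$ for a suitable $m=m(a,p)$.

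To bound the singularities of $Y$, I would first apply Lemma \ref{l_fact} to factor $f=h\circ g$, where $g\colon X\to Z$ is a sequence of smooth blow-ups and $h\colon Z\to Y$ contracts only prime divisors in $\Supp(g_*B)$. Since $g$ is a morphism, each $g_*S_i$ is either the strict transform of $S_i$ or zero, so $\Supp(g_*B)$ has at most $p$ components; hence the minimal resolution $h$ of $Y$ has at most $p$ exceptional divisors, and $Y$ has at most $p$ singular points. For the discrepancies: since $aB$ is Cartier and $\lfloor B\rfloor=0$, each $a_i$ lies in $\{0,1/a,\dots,(a-1)/a\}$, so $1-a_i\ge 1/a$. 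A standard monomial-valuation computation on the log smooth pair $(X,B)$ (reducing any exceptional valuation to a weighted blow-up on an SNC chart) then yields $a(E,X,B)\ge -1+1/a$ for every valuation $E$ over $X$. Because $f$ is $(K_X+B)$-non-positive, $a(E,Y,B_Y)\ge a(E,X,B)\ge -1+1/a$, and since $B_Y\ge 0$ we deduce $a(E,Y)\ge -1+1/a$ as well.

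With at most $p$ exceptional divisors over each singular point and $\varepsilon=1/a$, Proposition \ref{p_kltsurface} bounds the order of each local fundamental group of $Y$ by a constant depending only on $a$ and $p$; taking the lcm over the finitely many singular points produces $r=r(a,p)$ with $r(K_Y+B_Y)$ Cartier. Lemma \ref{l_ebpf} then yields $m=m(a,p)$ (a multiple of $r$) such that $|m(K_Y+B_Y)|$ is base-point free. Setting $L=m(K_Y+B_Y)$ and applying Proposition \ref{p_mumford}, whose proof requires only normality and Kawamata--Viehweg vanishing (both available on the KLT surface $Y$), with $k=1$ and $b>n+1=3$, gives the surjectivity of $H^0(Y,(b-1)L)\otimes H^0(Y,L)\to H^0(Y,bL)$ for all $b\ge 4$; iterating, $R(Y,L)$ is generated in degree $n+2=4$, and the ring isomorphism transports this to $R(X,m(K_X+B))$.

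The main obstacle I expect is the discrepancy bound in the second paragraph. Proposition \ref{p_kltsurface} requires a lower bound on $1+a(E,Y)$, the discrepancy of $Y$ regarded as a variety with no boundary, whereas the data directly controls only the pair $(X,B)$. The key observation making this work is the monotonicity $a(E,Y)\ge a(E,Y,B_Y)$, together with the fact that the arithmetic constraint $a\cdot a_i\in\mathbb{Z}$, combined with log smoothness of $(X,B)$ and the $(K_X+B)$-non-positivity of $f$, suffices to produce the uniform lower bound $-1+1/a$ for the pair discrepancies on $Y$.
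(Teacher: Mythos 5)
Your proof is correct and follows essentially the same route as the paper: pass to the log terminal model, factor it through the minimal resolution via Lemma~\ref{l_fact} to bound the number of exceptional curves by $p$, establish the discrepancy bound $\ge -1+\tfrac1a$ so that Proposition~\ref{p_kltsurface} bounds the Cartier index, and conclude with Lemma~\ref{l_ebpf} and Proposition~\ref{p_mumford}. The only cosmetic differences are that the paper gets the discrepancy bound just for the contracted components $\Gamma$ of $B$ directly from $a\,\mult_\Gamma B\in\mathbb Z_{>0}$ and $\mult_\Gamma B<1$ (rather than via the SNC discrepancy formula for all valuations), and that Proposition~\ref{p_mumford} applied with $G=(b-1)L$ gives surjectivity only for $b\ge 5$ rather than $b\ge 4$ --- which still yields generation in degree $4$ as claimed.
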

\begin{proof}
Let $f\colon \map X.S.$ be the log terminal model of $(X,B)$.  Lemma \ref{l_fact} implies that  $f$ factorizes through the minimal resolution $h\colon \map S'.S.$ of $S$. Let $g\colon \map X.S'.$ be the induced map. Then $h$ contracts only prime divisors which are rational curves with negative self-intersection and contained in the support of $g_*B$. 
Let $\Gamma$ be the strict transform of such a curve on $X$. Then 
$$\mult_\Gamma B \ge -a(\Gamma,S)$$
Thus, since $a\mult_\Gamma B$ is  a positive integer and $\mult_\Gamma B<1$, it follows that 
$$a(\Gamma,S)\ge -1 + \frac 1 a.$$

By  Proposition \ref{p_kltsurface}, it follows that there exists a constant $r=r(a,p)$ depending only on $a$ and $p$ such that $r D$ is Cartier for any Weil divisor $D$ on $S$. In particular, if  $B_S=f_*B$, then $ar(K_S+B_S)$ is a Cartier divisor. 
Thus, by Lemma \ref{l_ebpf} there exists a constant $q=q(a,p)$ depending on $a$ and $p$ such that 
the linear system $|q(K_S+B_S)|$ is  base point free. 
Let $L=q(K_S+B_S)$. Proposition \ref{p_mumford} implies that if $b>3$ then the natural map
$$H^0(S,\ring S.(bL ))\otimes H^0(S,\ring S.(L))\to H^0(S,\ring S.((b+1)L))$$
is surjective.
Thus, $R(X,q(K_X+B))$ is generated in degree $4$ and the claim follows.
\end{proof}

\begin{remark} Using the same notation as in Proposition \ref{p_ld_surfaces}, let 
 $q$ be the number of components of $B$ which are contained in the stable base locus of $K_X+B$. Then,
 the  same proof  shows that,  
$R(X,m(K_X+B))$ is generated in degree $4$ where $m$ is a constant depending on  $a$ and $q$.
Note that $q\le \rho(X)$. Thus, we can bound $m$ by a constant which depends on $a$ and the Picard number $\rho(X)$ of $X$. 
\end{remark}

The following example shows that the degree of the generators depends on the number of components $p$. 

\begin{example}
Let $r$ be a prime and let  $X_r$ be the smooth toric surface obtained by blowing-up a sequence of $r$ infinitesimal near points of $\mathbb P^2$. More specifically,  let $X_0=\mathbb P^2$ with a torus $T=(\mathbb C^*)^2$ acting on it, let 
 $X_1$ be the blow-up of $\mathbb P^2$ at a $T$-invariant point $p$ and for each $i=1,\dots,r$ let $X_i$ 
be the surface obtained by blowing-up the $T$-invariant point in the exceptional divisor of $f_{i-1}\colon \map X_{i-1}.X_{i-2}.$ which is not contained in the strict transform of the exceptional divisor of $f_{i-2}$. Then
$X_r$ admits a chain of $T$-invariant 
$(-2)$-curves  $e_1,\dots,e_{r-1}$ and a $T$-invariant $(-1)$-curve $e_r$, given the exceptional curve of $f_r$. Let $E=\sum_{i=1}
^re_i$, let $\pi\colon \map X_r.X_0.$ be the induced map and let $h=\pi^*\ell$ where  $\ell$ is the $T$-invariant line in $X_0$ not passing through $p$. 
Note that if $f\colon \map X_r.Y.$ is the map obtained by contracting the curves $e_1,\dots,e_{r-1}$, then  $Y$ admits a 
unique singular point of type $A_{r-1}$. 
Let 
$$G=\frac 1 r \sum_{i=1}^r i e_i.$$
Then the $T$-invariant curves on $X_r$ are contained in the classes $$e_1,\dots,e_r,h,h-rG,h-e_1.$$ 
It follows that if $a$ is a sufficiently large positive integer then $ah-2rG$ is nef and therefore the linear system $|a h-2rG|$ is base point free.
Thus, there exists a $\mathbb Q$-divisor $B_0\ge 0$ such that $B_0\sim_{\mathbb Q}  a h-2rG$, and 
if $B=B_0+\frac 1 2 \sum_{i=1}^r e_i$ then 
$(X,B)$ is  log smooth, $\rfdown B.=0$, and $2B$ is Cartier.  
It is easy to check that $f\colon \map X_r.Y.$ is the log terminal model of $(X,B)$ and that 
$$K_X+B=f^*(K_Y+f_*B)+E$$
where 
$$E=\frac 1 {2r}\sum_{i=1}^{r-1}  (r-i)e_i.$$
In particular, by Remark \ref{r_z}, we have $E=\bfix(K_X+B)$.
Thus, since $r$ is prime,  Lemma \ref{l_b} implies that if $a$ and $m$  are positive integer such that  $a$ is even and $R(X,a(K_X+B))$ is generated in degree $m$ then   either $a\ge r$ or $m\ge r$. 
\end{example}

The following example shows that Proposition \ref{p_ld_surfaces} cannot be extended to the log canonical (or dlt) case. 
\begin{example}
Let $X$ be the Hirzebruch surface $\mathbb F_r$ of prime degree $r\ge 3$, let $S$ be the unique curve of negative self-intersection $-r$ and let $H$ be a curve of self-intersection $r$. Then there exists an effective $\mathbb Q$-divisor  $B'\sim_{\mathbb Q} 2 H$ such that, $S$ is not contained in the support of $B'$ and  if 
$B=S+B'$ then $(X,B)$ is log smooth, $\rfdown B'.=0$ and  $2B'$ is Cartier. In particular,  the support of $B'$ does not intersect $S$. 

It is easy to check that  
$$\bfix(K_X+B)=\frac 2 rS.$$ 
Thus, Lemma \ref{l_b} implies that if $a$ is an even positive integer such that $R(X,a(K_X+B))$ is generated in degree $m$ then either $a\ge r$ or $m\ge r$.  
\end{example}

We now proceed with the proof of Theorem \ref{t_ld_threefold}:

\begin{proof}[Proof of Theorem \ref{t_ld_threefold}]
We may assume that $K_X+B$ is pseudo-effective, otherwise $R(X,K_X+B)$ is trivial. Since $B$ is big or $K_X+B$ is big, 
Lemma \ref{l_nef} implies that 
there exists a sequence of steps 
$$X=\rmap X_0.\rmap .\dots. .X_k=Y.$$
of the $K_X$-minimal model program such that the induced birational map $f\colon \rmap X.Y.$ is a log terminal model of $(X,B)$. Note that, in particular, since $X$ is smooth, $Y$ has terminal singularities. 

By  Proposition \ref{p_2b2}, it follows that there exists a constant $r$ depending only on $\rho(X)$  such that $r D$ is Cartier for any Weil divisor $D$ on $Y$. In particular, if  $B_Y=f_*B$, then $ra(K_Y+B_Y)$ is a Cartier divisor. By Theorem \ref{t_ebpf}  there exists a constant $q'$ 
such that, if $q=q'r$, then the linear system $|qa(K_Y+B_Y)|$ is  base point free. 
Similarly to Proposition  \ref{p_ld_surfaces}, Proposition \ref{p_mumford} implies that $R(X,qa(K_X+B))$ is generated in degree $5$, as claimed. 
\end{proof}

We expect that a more general result holds for any Kawamata log terminal pair threefold or even more in general for adjoint rings on threefolds, as in the case of surfaces. 
On the other hand, the following example shows that it is not possible to bound the degree of the generators of the canonical ring $R(X,K_X)$ of a smooth projective variety $X$ independently of $\rho(X)$, even assuming $B=0$.

\begin{example}
Let $r$ be  a prime number and let $Y_r$ be a projective variety of dimension $3$ such that $K_{Y_r}$ is ample and $Y_r$ admits a singular point of type
$$\frac 1 r(1,1,r-1).$$
Let $\map X_r.Y_r.$ be a resolution. Then there exists a prime divisor $E$ on $X_r$ which is  exceptional over $Y_r$ and such that $a(E,Y_r)=\frac 1 r$. In particular, 
$$\mult_E \bfix(K_{X_r})=\frac 1 r.$$
Thus, since $r$ is prime,  Lemma \ref{l_b} implies that if $q$ is a positive integer and  $R(X,qK_X)$ is  generated in degree $m$, then either $q\ge r$ or $m\ge r$.
\end{example}

\begin{proof}[Proof of Corollary \ref{c_main}]
It follows immediately from Theorem \ref{t_ld_threefold} and Lemma \ref{l_b}.
\end{proof}

\bibliographystyle{halpha}
\bibliography{Library}

\end{document}